\newtheorem{lemma}{Lemma}
\newtheorem{prop}[lemma]{Proposition}
\newtheorem{defi}[lemma]{Definition}
\newtheorem{theorem}[lemma]{Theorem}
\newcommand {\emptycomment}[1]{}
\def\z{\dot z}
\newcommand{\Z}{\mathbb{Z}}
\def\mh{\mathfrak{h}}
\newcommand{\e}{\epsilon}
\newcommand{\C}{\mathbb C}
\def\mg{\mathfrak{g}}
\def\md{\mathfrak{D}}
\def\sl{\mathfrak{sl}}
\def\gl{\mathfrak{gl}}
\def\sp{\mathfrak{sp}}
\def\ms{\mathfrak{s}}
\def\mn{\mathfrak{n}}
\begin{document}

\title[Harish-Chandra modules]{The category  of weight modules for    symplectic oscillator  Lie  algebras}

\author{Genqiang Liu}
\address{ School of Mathematics and Statistics, Henan University, Kaifeng 475004,
China}
\email{liugenqiang@amss.ac.cn}

\author{Kaiming Zhao}
\address{Department of Mathematics, Wilfrid
Laurier University, Waterloo, ON, Canada N2L 3C5,  and College of
Mathematics and Information Science, Hebei Normal University, Shijiazhuang 050016, Hebei, China}
\email{kzhao@wlu.ca}

\date{}

\begin{abstract}The rank $n$ symplectic oscillator  Lie  algebra $\mathfrak{g}_n$ is the semidirect product of the symplectic  Lie algebra $\mathfrak{sp}_{2n}$ and the Heisenberg Lie algebra $H_n$.  In this paper, we study weight modules with finite dimensional weight spaces over $\mathfrak{g}_n$. When $\dot  z\neq 0$, it is shown  that  there is an equivalence between the full subcategory $\mathcal{O}_{\mathfrak{g}_n}[\dot z]$  of the BGG  category $\mathcal{O}_{\mathfrak{g}_n}$ for $\mathfrak{g}_n$ and the BGG category   $\mathcal{O}_{\mathfrak{sp}_{2n}}$ for   $\mathfrak{sp}_{2n}$.  Then using the technique of localization and the structure of generalized highest weight modules,  we also give the  classification of  simple weight  modules over $\mathfrak{g}_n$ with finite-dimensional weight spaces.
\noindent
\end{abstract}

\subjclass[2010]{17B10, 17B81, 22E60}

\keywords{symplectic oscillator  Lie  algebra, symplectic  Lie algebra, Heisenberg Lie algebra, generalized highest weight module, BGG  category, Harish-Chandra module}

\maketitle

%

\section{Introduction}

Many important Lie algebras in  mathematical physics are finite dimensional but not semi-simple, such
as Schr{\"o}dinger algebras \cite{DDM},   conformal Galilei algebras \cite{GM, LMZ}, symplectic oscillator  Lie  algebras \cite{BS, AK}, Euclidean  algebras \cite{IK} and so on.
Unlike finite dimensional semi-simple Lie algebras, the representation theory of those Lie algebras is still not well developed \cite{Hu, M}. In this paper we will establish the representation theory for symplectic oscillator  Lie  algebras.

The rank $n$ symplectic oscillator  Lie  algebra $\mg_n$ is the the semidirect product of the symplectic  Lie algebra
$\sp_{2n}$ and the Heisenberg Lie algebra $H_n$.
This algebra is also called  Jacobi Lie algebra  in the literature, see \cite{BS,S}.  The universal enveloping
 algebra of $\mg_n$  is an infinitesimal Hecke algebra, see \cite{EGG}.
In mathematics, the Jacobi group is the semidirect product of the symplectic group and the Heisenberg group \cite{BS}.
The Jacobi group is an important object in connection with quantum
mechanics, geometric quantization, optics. The Jacobi groups were used to  describe the ``squeezed coherent states'' of quantum optics \cite{Be}. In   Number Theory,
automorphic forms on the Jacobi group are called Jacobi forms which has close relationship with the modular forms, see \cite{BS, EZ}.

 In this paper, we study the BGG category $\mathcal{O}_{\mg_n}$ for $\mg_n$
and give the classification of  simple Harish-Chandra modules for $\mg_n$.

 A classification of  simple Harish-Chandra modules over the following Lie algebras have been obtained,
 the Virasoro algebra \cite{M0},
 finite dimensional simple Lie algebras \cite{Fe, M},  conformal Galilei algebras \cite{LMZ}, the twisted Heisenberg Virasoro algebra \cite{LZ}, and affine Kac-Moody algebras  \cite{FT, DG} (the zero central charge case was claimed in \cite{DG}).

The paper is organized as follows.   In Section \ref{sec2} we provide the related definitions and notations.

In Section \ref{sec3} we first we establish an isomorphism from $U(\mg_n)/\langle z-\z\rangle$  to the associative algebra $U(\sp_{2n})\otimes \md_n$  for $\z\ne0$ (Proposition \ref{proposition3}), and show that any module in $\mathcal{O}_{\mg_n}[\z]$ with $\z\ne0$ is completely reducible over $H_n$ (Lemma \ref{eigen}). Then we prove that     the full subcategory $\mathcal{O}_{\mg_n}[\z]$ of $\mathcal{O}_{\mg_n} $ for $\mg_n$ with nonzero $\z$
  is equivalent to  the BGG category $\mathcal{O}_{\sp_{2n}}$ for   $\sp_{2n}$ (Theorem \ref{prop721}).

  The classification of all simple weight  modules with finite dimensional weight spaces for $\mg_n$
is obtained in Section \ref{sec4}. Theorem \ref{p14} gave all such $\mg_n$-modules with $z$ acts trivially,  which are actually simple parabolically induced $\sp_{2n}$-modules and  simple cuspidal $\sp_{2n}$-modules (See \cite{M}).   Theorem \ref{t15} gave all such $\mg_n$-modules with $z$ acts non-trivially, which consists of three classes: cuspidal $\mg_n$-modules in (a), parabolically induced   $\mg_n$-modules in (b), and a third class described in (c). We can see that the  third class in Theorem \ref{t15} (c) does not appear for finite-dimensional simple Lie algebras.
The representation theory of polynomial differential operator algebras, technique of localization and
 the structure of parabolically induced modules are widely used in our proofs.

Throughout  this paper, we denote by $\mathbb{Z}$, $\mathbb{Z}_+$, $\mathbb{N}$,
$\mathbb{C}$ and $\mathbb{C}^*$ the sets of  all integers, nonnegative integers,
positive integers, complex numbers, and nonzero complex numbers, respectively.
For any Lie algebra $\mg$, we denote its universal enveloping algebra by $U(\mg)$.

\section{Definitions and notations}
\label{sec2}
\subsection{The symplectic oscillator  Lie  algebra $\mg_n$}
We know that  the symplectic Lie algebra $\mathfrak{sp}_{2n}$  has  the natural representation on $\C^{2n}$ by left matrix  multiplication. Let $\{e_1, e_2, \cdots, e_{2n}\}$ be  the standard basis of $\C^{2n}$. The Heisenberg Lie algebra $H_{n}=\C^{2n}\oplus \C z$ is the Lie algebra with
Lie bracket   given by
$$[e_i, e_{n+i}]=z, \qquad [z, H_n]=0.$$
Recall that the symplectic oscillator  algebra $\mg_n$ is  the the semidirect product Lie algebra
$$\mg=\sp_{2n}\ltimes H_{n}.$$
Explicitly, the semidirect relations are $$[X,v]=Xv, \qquad [X, z]=0,$$ for all $X\in \mathfrak{sp}_{2n}, v\in \C^{2n}$.
When $n=1$, $\mg_1$ is the Schr{\"o}dinger algebra studied in \cite{DDM}. The representations of $\mg_1$ were studied
in \cite{D, DDM, DLMZ}.

The universal enveloping algebra $U(\mg_n)$ of the Jacobi Lie  algebra $\mg_n$ is   an infinitesimal Hecke algebra,
see  Example 4.11 in \cite{EGG}.

\subsection{Root space decomposition of $\mg_n$}

Recall that    $\mathfrak{sp}_{2n}$ is the Lie subalgebra of $\mathfrak{gl}_{2n}$ consisting of all $2n\times 2n$-matrices $X$ satisfying $SX=-X^{T}S$ where
\begin{displaymath}
 S=\left( \begin{array}{cc}
  0 & I_{n}\\
  -I_{n} & 0\\
 \end{array} \right).
\end{displaymath}
Equivalently, $\mathfrak{sp}_{2n}$ consists of all $2n\times 2n$-matrices with block form
\begin{displaymath}
 \left( \begin{array}{cc}
  A & B\\
  C & -A^T\\
 \end{array} \right)
\end{displaymath}
such that $B= B^{T}$,  $C= C^{T}$. Let $e_{ij}$ denote the matrix unit whose $(i,j)$-entry is $1$ and $0$ elsewhere.
Then $\mh=\text{span}\{h_{i}:=e_{i,i} - e_{n+i,n+i} \mid 1 \leq i \leq n\}$  is the standard Cartan subalgebra of $\mathfrak{sp}_{2n}$
and
$$\mathfrak{h}_n=\C h_1\oplus \cdots \oplus \C h_n \oplus \C z$$ is a Cartan subalgebra
of $\mg_n$.
Let $\{\epsilon_{i}\}\subset\mathfrak{h}_n^{*}$ be such that $\epsilon_{i}(h_{k})=\delta_{i,k}$ and $\epsilon_{i}(z)=0$.
The root system of $\mg_n$ is precisely \[\Delta= \{\pm \epsilon_{i} \pm \epsilon_{j},  \pm\epsilon_{j} | 1 \leq i,j \leq n\} \setminus \{0\}
=\Delta_{\sp_{2n}}\cup  \{\pm\epsilon_{j} | 1 \leq j \leq n\} \]
where $\Delta_{\sp_{2n}}$ is the root system of ${\sp_{2n}}$.
The positive root system is
$$\Delta_+=\{\epsilon_{i}-\epsilon_{j}, \epsilon_{k}+\epsilon_{l}, \epsilon_{k}, \mid 1\leq i<j\leq n, 1\leq k, l\leq n\}.$$

We list root vectors in $\mg_n$ as follows. The indices $i,j$ are integers between $1$ and $n$, with $i\ne j$ when we encounter $\epsilon_{i}-\epsilon_{j}$.
\begin{displaymath}
\begin{array}{r c l | c}
\multicolumn{3}{c}{\text{Root vector}} &\text{Root} \\
\hline
X_{\epsilon_{i}}              &:=& e_{i}                 & \epsilon_{i}\\
X_{-\epsilon_{i}}             &:=& e_{n+i}                 & -\epsilon_{i}\\
X_{\epsilon_{i}+\epsilon_{j}}  &:=& e_{i,n+j} + e_{j,n+i}      & \epsilon_{i}+\epsilon_{j}\\
X_{-\epsilon_{i}-\epsilon_{j}} &:=& e_{n+i,j} +e_{n+j,i}      & -\epsilon_{i}-\epsilon_{j}\\
X_{\epsilon_{i}-\epsilon_{j}}  &:=& e_{i,j} - e_{n+j,n+i}      & \epsilon_{i}-\epsilon_{j}\\
\end{array}
\end{displaymath}
Then we obtain a basis of $\mg_n$ as follows $$B:=\{X_{\alpha} | \alpha \in \Delta\} \cup \{h_{i} , z| 1 \leq i \leq n\}.$$

Set
\begin{displaymath}
\mathfrak{n}_{\pm}:=\bigoplus_{\alpha\in \Delta_{\pm}}\mathfrak{\mg}_\alpha.
\end{displaymath}
Then the decomposition
\begin{equation}\label{eq1}
\mg_n=\mathfrak{n}_{-}\oplus\mathfrak{h}_n\oplus \mathfrak{n}_{+}
\end{equation}
is a {\em triangular decomposition} of $\mathfrak{\mg}_n$ which  tells us that
\begin{displaymath}
U(\mg_n)\cong U(\mathfrak{n}_{-})\otimes U(\mathfrak{h}_n)\otimes U(\mathfrak{n}_{+}).
\end{displaymath}
The Lie subalgebra $\mathfrak{b}:=\mathfrak{h}_n\oplus \mathfrak{n}_{+}$  is   a Borel subalgebra of $\mg_n$.

\subsection{Weight modules}
 A $\mg_n$-module  $M$ is called a {\it weight module} if $\mathfrak{h}_n$ acts diagonally on  $M$, i.e.
$$ M=\oplus_{\lambda\in \mathfrak{h}_n^*} M_\lambda,$$
where $M_\lambda:=\{v\in M \mid hv=\lambda v\}.$   Denote $$\mathrm{Supp}(M):=\{\lambda\in \mathfrak{h}_n^* \mid M_\lambda\neq0\}.$$

A weight module is called a {\it Harish-Chandra module} if all its weight spaces are finite dimensional.
For a weight module $M$, a
weight vector $v\in M_\lambda$ is called a {\it highest weight vector} if $\mathfrak{n}_{+}v=0$.
 A module is called a {\it highest weight module}
if it is generated by a highest weight vector.  A module is called to  be  {\it uniformly bounded}
if the dimensions of all weight spaces are bounded by a fixed integer.

\subsection{Verma modules}

For $\lambda\in\mathfrak{h}^*$ and $\z\in\C$ denote by $\mathbb{C}_{\lambda, \z}$ the one-dimensional $\mathfrak{b}$-module
with the generator $v_{\lambda}$ and the action given by
\begin{displaymath}
\mathfrak{n}_+ \mathbb{C}_{\lambda, \z}=0,\qquad z \cdot v_{\lambda}=\dot z v_{\lambda}, \qquad h \cdot v_{\lambda}=\lambda(h) v_{\lambda}\text{ for all } h\in\mathfrak{h}.
\end{displaymath}
The {\em Verma module} is defined, as usual, as follows:
\begin{displaymath}
M(\z, \lambda):=\mathrm{Ind}_{\mathfrak{b}}^{\mg_n}\mathbb{C}_{\lambda, \z}\cong
U(\mg_n)\bigotimes_{U(\mathfrak{b})}\mathbb{C}_{\lambda, \z}.
\end{displaymath}
For convenience,  we also denote  $1\otimes v_{\lambda}$ of $M(\z,\lambda)$
simply by $v_{\lambda}$.
Let  $K(\z,\lambda)$  be the unique maximal proper submodule of $M(\z,\lambda)$.
The quotient module $L(\z,\lambda)=M(\z,\lambda)/K(\z,\lambda)$ is the
 unique simple quotient module of $M(\z,\lambda)$. Similarly, we have the modules
 $M_{\sp_{2n}}(\lambda)$ and $L_{\sp_{2n}}(\lambda)$ for $\sp_{2n}$.

\section{BGG category}\label{sec3}
In this section, we study the  BGG category $\mathcal{O}_{\mg_n} $ for $\mg_n$.

\subsection{The BGG category $\mathcal{O}_{\mg_n} $}\label{Odef}

\begin{defi} The BGG category $\mathcal{O}_{\mg_n} $ for $\mg_n$ is the full subcategory of $\emph{Mod}\,U(\mg_n)$ (the category of all left $U(\mg_n)$-modules) whose objects $M$ are
the modules satisfying the following three conditions.
\begin{enumerate}[$($a$)$]
\item $M$ is a finitely generated $U(\mg_n)$-module.
\item $M$ is a weight module.
\item $M$ is locally $\mn_+$-finite, i.e., for each $v\in M$, the subspace $U(\mn_+)v$ is finite dimensional.
\end{enumerate}
\end{defi}

Similarly, we have  the BGG category $\mathcal{O}_{\sp_{2n}}$ for $\sp_{2n}$.

For information  on the BGG category $\mathcal{O} $ for semisimple Lie algebras, one can see   \cite{BGG, Hu}.
By the standard arguments, we can prove that the category $\mathcal{O}_{\mg_n} $  has the following  property.

\begin{lemma}\label{basic-property}Let $M$ be any  object in $\mathcal{O}_{\mg_n} $.
\begin{enumerate}[$($1$)$]
\item The module $M$ has a finite filtration
$$0 = M_0 \subset  M_1\subset \cdots  \subset M_m = M$$
with each factor $M_j/M_{j-1}$ for $1 \leq j\leq m$ being a highest weight module.
\item  Each weight space of $M$  is finite dimensional.
\item Any simple object in  $\mathcal{O}_{\mg_n} $ is isomorphic to some $L(\z, \lambda)$
for $\lambda \in\mh^*, \z\in \C$.
\end{enumerate}
\end{lemma}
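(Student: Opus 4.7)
The plan is to mimic the standard proof of the analogous statement for semisimple Lie algebras (see \cite{Hu}), with minor modifications to account for the central element $z\in\mh_n$. For part (1), I would start with a finite set $v_1,\ldots,v_k$ of weight vectors generating $M$, which exists because $M$ is a weight module and is finitely generated. Local $\mn_+$-finiteness together with the fact that each $v_i$ is a weight vector makes $V:=\sum_i U(\mathfrak{b})v_i=\sum_i U(\mn_+)v_i$ a finite-dimensional $\mathfrak{b}$-submodule of $M$ that is $\mh_n$-semisimple. Refining a composition series of $V$ as a $\mathfrak{b}$-module using the $\mh_n$-weight decomposition, one obtains a chain $0=V_0\subset V_1\subset\cdots\subset V_m=V$ whose successive quotients are one-dimensional, each spanned by the image of an $\mh_n$-weight vector $\tilde u_j$ of some weight $(\dot z_j,\mu_j)$ on which $\mn_+$ acts into $V_{j-1}$. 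Setting $M_j:=U(\mg_n)V_j$ then yields the desired filtration: $M_j/M_{j-1}$ is generated by the image of $\tilde u_j$, which is annihilated by $\mn_+$ modulo $M_{j-1}$, so $M_j/M_{j-1}$ is a quotient of the Verma module $M(\dot z_j,\mu_j)$.

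For part (2), since the property of having finite-dimensional weight spaces is preserved under extensions, it suffices by (1) to verify it for a single Verma module $M(\z,\lambda)$. The PBW decomposition $U(\mg_n)\cong U(\mn_-)\otimes U(\mh_n)\otimes U(\mn_+)$ shows that the weights of $M(\z,\lambda)$ all have the form $\lambda-\sum_{\alpha\in\Delta_+}k_\alpha\alpha$ with $k_\alpha\in\bz_+$; since $\Delta_+$ is finite and lies in a strict half-space, only finitely many such tuples produce any given weight, so each weight space is finite dimensional.

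For part (3), apply (1) to a simple $M$: the filtration must collapse to a single step, so $M$ is itself a highest weight module, hence a quotient of some $M(\z,\lambda)$; simplicity together with the uniqueness of the simple quotient of $M(\z,\lambda)$ then forces $M\cong L(\z,\lambda)$. I expect the main (though mild) obstacle to be the verification in part (1) that the composition series of $V$ can really be refined so that each $V_j/V_{j-1}$ is spanned by an $\mh_n$-weight vector whose lift becomes a highest weight vector modulo $M_{j-1}$. This rests on $V$ being a weight module for $\mh_n$ (so weight-compatible refinements exist) and on $z$ being central in $\mg_n$, which ensures that $z$ acts by a single scalar on each one-dimensional factor. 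The remaining steps are routine bookkeeping with PBW.
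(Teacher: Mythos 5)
Your proof is correct and is exactly the standard argument (following Humphreys) that the paper implicitly invokes when it writes ``By the standard arguments, we can prove that the category $\mathcal{O}_{\mg_n}$ has the following property'' — the paper itself gives no proof. All the steps go through in this non-semisimple setting: $\mathfrak{b}=\mathfrak{h}_n\oplus\mathfrak{n}_+$ is solvable with $[\mathfrak{b},\mathfrak{b}]=\mathfrak{n}_+$ nilpotent, so Lie's theorem gives the one-dimensional $\mathfrak{b}$-composition factors of $V$; each such factor is killed by $\mathfrak{n}_+$ and, because $V$ is $\mathfrak{h}_n$-semisimple and the $V_j$ are $\mathfrak{h}_n$-stable, admits a weight-vector lift $\tilde u_j$. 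For part (2), the positive roots $\epsilon_i-\epsilon_j\ (i<j)$, $\epsilon_k+\epsilon_l$, $\epsilon_k$ all have strictly positive value under the functional $\ell$ with $\ell(\epsilon_i)=n+1-i$, so they lie in a pointed cone and the Kostant-type finiteness argument applies; $z$ contributes nothing since it acts by the fixed scalar $\dot z$ on a highest weight module. The only cosmetic point worth noting is that some of the $M_j/M_{j-1}$ in your filtration may vanish; one simply discards those indices to obtain a strictly increasing chain.
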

So  highest weight modules are the basic elements of   the category $\mathcal{O}_{\mg_n} $.

 Next we will introduce the Shale-Weil representation for $\mg_n$, which is important
 for our later arguments on the category $\mathcal{O}_{\mg_n} $.
\subsection{Shale-Weil representation}

 For $n\in\mathbb{N}$,  denote by $\md_{n}$ the algebra of polynomial differential operators of $\C[t_1,\cdots,t_n]$, called the {\it Weyl algebra of rank} $n$. Namely,
$\md_{n}$  is the associative algebra  over $\C$ generated by $2n$-indeterminates $t_1,\dots,t_n$,
$\partial_1,\dots,\partial_n$ subject to
the relations
$$[\partial_i, \partial_j]=[t_i,t_j]=0,\qquad [\partial_i,t_j]=\delta_{i,j},\ 1\leq i,j\leq n.$$
A $\md_n$-module $V$ is a {\it weight module} if all $t_i\partial_i$ are semisimple on $V$.
For a weight module $V$, a
weight vector $v\in V_\lambda$ is called a {\it highest weight vector} if $\C[t_1, t_2, \cdots, t_n]v=0$.
 A $\md_n$-module $V$ is called a {\it highest weight module} if it is generated by a highest weight vector.

 \begin{prop}\label{proposition3}For any nonzero scalar $\z$, we have the     associative algebra isomorphism $ \phi_{\z}: \,\, U(\mg_n)/\langle z-\z\rangle\rightarrow U(\sp(2n))\otimes \md_n$ defined by
$$\aligned
          & X_{\e_i-\e_j}\mapsto  X_{\e_i-\e_j}\otimes 1+ 1\otimes t_i\partial_j, (i\ne j),\\
          & X_{\e_i+\e_j}\mapsto  X_{\e_i+\e_j}\otimes 1+ 1\otimes t_it_j,\\
          & X_{-\e_i-\e_j}\mapsto  X_{-\e_i-\e_j}\otimes 1- 1\otimes \partial_i \partial_j,\\
          & h_i\mapsto h_i\otimes 1 +1\otimes t_i\partial_i+\frac{1}{2},\\
         & X_{\epsilon_{i}}\mapsto 1\otimes \sqrt{\z}t_i,\\
          &X_{-\epsilon_{i}}\mapsto -1\otimes \sqrt{\z}\partial_i,\\
          \endaligned
$$ where $1\leq i, j \leq n$.
 \end{prop}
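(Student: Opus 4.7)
The plan is to verify that $\phi$ is a well-defined algebra homomorphism, observe it factors through the ideal $\langle z-\z\rangle$, and then establish bijectivity. The guiding observation is that $\phi$ is uniformly given by $\phi(X)=X\otimes 1+1\otimes \sigma(X)$ for $X\in\sp_{2n}$, where $\sigma\colon\sp_{2n}\to\md_n$ is the Shale--Weil (oscillator) representation sending each $\sp_{2n}$ root vector to the quadratic operator in the statement; and $\phi(v)=1\otimes\sqrt{\z}\,v_W$ for $v\in\C^{2n}\subset H_n$ via the Stone--von Neumann assignment $e_i\mapsto t_i$, $e_{n+i}\mapsto -\partial_i$, with $\phi(z)=\z$.

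To check that $\phi$ extends to an algebra homomorphism of $U(\mg_n)$, I would verify that the defining Lie brackets of $\mg_n$ are preserved, in four groups. (i) Brackets inside $\sp_{2n}$: since $[\phi(X),\phi(Y)]=[X,Y]\otimes 1+1\otimes[\sigma(X),\sigma(Y)]$, this reduces to $\sigma$ being a Lie algebra homomorphism, a classical fact verified by direct calculation with the quadratic generators in $\md_n$. (ii) Heisenberg relations: $[\sqrt{\z}\,t_i,-\sqrt{\z}\,\partial_j]=\z\,\delta_{ij}=\delta_{ij}\,\phi(z)$, and the other two brackets vanish trivially. (iii) The semidirect relations $[X,v]=Xv$ reduce to $[\sigma(X),v_W]=(Xv)_W$, which expresses that the Shale--Weil action is compatible with the natural $\sp_{2n}$-action on $\C^{2n}$ through the Stone--von Neumann realization. (iv) $\phi(z)=\z$ is central. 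Since $\phi(z-\z)=0$, the induced map descends to $U(\mg_n)/\langle z-\z\rangle$.

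Surjectivity is clear: $1\otimes t_i=\z^{-1/2}\phi(X_{\e_i})$ and $1\otimes \partial_i=-\z^{-1/2}\phi(X_{-\e_i})$ lie in the image, giving $1\otimes\md_n$; then $X_\alpha\otimes 1=\phi(X_\alpha)-1\otimes\sigma(X_\alpha)$ and $h_i\otimes 1=\phi(h_i)-1\otimes(t_i\partial_i+\tfrac{1}{2})$ give $U(\sp_{2n})\otimes 1$, and these two subalgebras generate the target. For injectivity I would use a PBW/filtration argument: place $X_{\pm\e_i}$ in degree $1$ and $X_\alpha,h_i$ in degree $2$ on the source (with $z=\z$ in degree $0$), and analogously on the target put $1\otimes t_i,1\otimes \partial_i$ in degree $1$ and $X_\alpha\otimes 1,h_i\otimes 1$ in degree $2$. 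All brackets land in strictly lower filtration degree, so both associated gradeds identify with the graded polynomial algebra $S(\sp_{2n}\oplus\C^{2n})$ (with $\sp_{2n}$ in degree $2$ and $\C^{2n}$ in degree $1$). The map $\phi$ is filtration-preserving, and its associated graded is block lower triangular with invertible diagonal blocks (identity on the $\sp_{2n}$ summand in degree $2$ and the isomorphism $v\mapsto\sqrt{\z}\,v_W$ on the $\C^{2n}$ generators in degree $1$), hence an isomorphism of graded algebras. This forces $\phi$ itself to be an isomorphism.

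The main obstacle is the systematic bracket verification in (i) and (iii), i.e., showing that the prescribed quadratic operators form a copy of $\sp_{2n}$ inside $\md_n$ and that this copy intertwines the natural action on $\C^{2n}$ with the Stone--von Neumann action. Each individual check is a short Weyl-algebra commutator computation, but the bookkeeping over all pairs of root vectors (including signs and the various index patterns $\e_i\pm\e_j$) is the most tedious step. Once those identities are verified, the descent, surjectivity, and graded-level injectivity are formal consequences.
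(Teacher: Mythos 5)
Your proof is correct, and it reaches the same conclusion by a genuinely different route for the injectivity step. Both proofs begin identically: verify that the assignment preserves all Lie brackets (the paper does this by first checking that the map into the \emph{second} factor alone, $f_{\dot z}\colon\mg_n\to\md_n$, is a Lie algebra homomorphism, and then observing that the ``coproduct-like'' map $X\mapsto X\otimes 1+1\otimes f_{\dot z}(X)$ for $X\in\sp_{2n}$ is automatically an algebra homomorphism — you check the brackets of $\phi$ directly, which amounts to the same computation) and note surjectivity from the generators. Where you diverge is bijectivity. The paper uses the PBW vector-space factorization $U(\mg_n)/\langle z-\dot z\rangle\cong U(\sp_{2n})\otimes_\C U(H_n)/\langle z-\dot z\rangle$, writes $\phi_{\dot z}$ as the linear map $\iota_1\otimes\iota_2$ with $\iota_1$ injective and $\iota_2$ an isomorphism, and then observes that the target also factors as $\iota_1(U(\sp_{2n}))\otimes\iota_2(\md_n)$ — this is the ``triangularity'' of $\iota_1$ made explicit. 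Your approach packages the same triangularity into a nonstandard filtration (Heisenberg generators in degree $1$, $\sp_{2n}$ in degree $2$, $z$ in degree $0$) and compares associated gradeds, both of which are the symmetric algebra $S(\sp_{2n}\oplus\C^{2n})$ with the weighted grading; the graded map is then block lower-triangular on the generating space. Each approach buys something: the paper's explicit tensor factorization avoids introducing a new filtration and keeps the argument elementary; your filtration argument is more uniform and transparently explains \emph{why} the off-diagonal term $1\otimes\sigma(X_\alpha)$ is harmless (it has strictly lower $\sp_{2n}$-degree, even though it has the same weighted degree). One small caveat with your version: you should note that the associated graded of the source really is the full symmetric algebra (this follows from the PBW basis of the quotient, since the bracket strictly drops the weighted filtration degree), and that a surjective graded endomorphism of a polynomial algebra with finite-dimensional graded pieces is automatically injective — both are standard but worth a sentence.
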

 \begin{proof}  We can directly verify that the linear map $f_{\z}: \,\,  \mg_{n} \rightarrow  (\md_n, [, ])$ defined by
  \begin{equation}\label{weil-rep}\aligned
          & X_{\e_i-\e_j}\mapsto  t_i\partial_j, (i\ne j),\\
          & X_{\e_i+\e_j}\mapsto  t_it_j,\\
          & X_{-\e_i-\e_j}\mapsto   -\partial_i \partial_j,\\
          & h_i\mapsto t_i \partial_i+\frac{1}{2},  \\
          & X_{\epsilon_{i}}\mapsto\sqrt{\z} t_i,\\
          &X_{-\epsilon_{i}}\mapsto -\sqrt{\z}\partial_i,\\
          \endaligned
          \end{equation}
for $1\leq i, j \leq n$, is a Lie algebra homomorphism. (Note that the above embedding for $\sp_{2n}$ is similar to \cite{BL}). Using PBW Theorem we   have the  surjective   associative algebra homomorphism $ \theta_{\z}: \,\, U(\mg_{n})\rightarrow  \md_n$ defined by extending $f_{\z}$.

It is clear that $U(\mg_n)/\langle z-\z\rangle=U(\sp_{2n})\otimes U(H_n)/\langle z-\z\rangle$ where the right-hand side is just considered as a vector space tensor product, not tensor product of the two  associative algebras.

We have the  algebra homomorphism
 $ \iota_1: \,\, U(\sp_{2n})\rightarrow U(\sp_{2n})\otimes \md_n$ defined by
$$\aligned
          & X_{\e_i-\e_j}\mapsto  X_{\e_i-\e_j}\otimes 1+ 1\otimes t_i\partial_j, (i\ne j),\\
          & X_{\e_i+\e_j}\mapsto  X_{\e_i+\e_j}\otimes 1+ 1\otimes t_it_j,\\
          & X_{-\e_i-\e_j}\mapsto  X_{-\e_i-\e_j}\otimes 1- 1\otimes \partial_i \partial_j,\\
          & h_i\mapsto h_i\otimes 1 +1\otimes t_i\partial_i+\frac{1}{2},\\
          \endaligned
$$ where $1\leq i, j \leq n$. Note that $\iota_1$ is injective.
We also  have the algebra isomorphism  $ \iota_2: \,\, U(H_n)/\langle z-\z\rangle\rightarrow  \md_n$ defined by
$$ X_{\epsilon_{i}}\mapsto\sqrt{\z} t_i,\,\,\,
          X_{-\epsilon_{i}}\mapsto -\sqrt{\z}\partial_i, \forall 1\leq i \leq n.$$
We can eaily see that  $ \phi_{\z}$ is a surjective algebra homomorphism, and $$U(\sp(2n))\otimes \md_n=\iota_1(U(\sp_{2n}))\otimes \iota_2(U(H_n)/\langle z-\z\rangle),$$
where the right-hand side is just considered as a vector space tensor product.
Then the linear map (not considered as algebra homomorphism) $$\phi_{\z}=\iota_1\otimes\iota_2:U(\sp_{2n})\otimes U(H_n)/\langle z-\z\rangle \to \iota_1(U(\sp_{2n}))\otimes \iota_2(U(H_n)/\langle z-\z\rangle)$$
is a vector space isomorphism.
Therefore $\phi_{\z}$ is an algebra   isomorphism. The proof is complete.
 \end{proof}

Note that the restriction of $\phi_{\z}:U(H_n)/\langle z-\z\rangle\to \md_n$ is an algebra isomorphism for $\z\ne0$.
Later in this paper will identify these two associative algebras.

For any $U(\sp_{2n})$-module $V$, $\md_{n}$-module $N$, via the homomorphism $\phi_{\z}$, $V\otimes N$ can be viewed as a  $\mg_n$-module which is denoted by
${V}\otimes _{\z}{N}$.  In particular, the simple $\md_{n}$-module $$S=(\C[t_1^{\pm 1}]/\C[t_1] )\otimes (\C[t_2^{\pm 1}]/\C[t_2] )\otimes\cdots \otimes(\C[t_n^{\pm 1}]/\C[t_n])$$becomes a $\mg_n$-module through $\theta_{\z}$, denoted by $S_{\z}$, which is isomorphic to the simple highest weight $\mg_n$-module
$L(\z, -\frac{1}{2}(\epsilon_1+\cdots+\epsilon_n))$.  This module is called the {\it Shale-Weil module} over $\mg_n$  (see \cite{M}).

\subsection{The structure of highest weight modules  over $\mg_n$}

\begin{prop}\label{thm-highest}
Let $ \z\in  \C, \lambda\in\mh^*$.
\begin{enumerate}[$($a$)$]
\item If $\z=0$, then $H_n L(\z, \lambda)=0$.
\item
If $\z\neq 0$, then we have that
$$M(\z, \lambda)\cong  {M}_{\sp_{2n}}(\lambda')
\otimes_{\z}S , $$
$$L(\z, \lambda)\cong  {L}_{\sp_{2n}}(\lambda')
\otimes_{\z}S , $$
where $\lambda'=\lambda
+\frac{1}{2}(\epsilon_1+\cdots+\epsilon_n)$.
Consequently,
$M(\z, \lambda)$ is simple if and only if $M_{\sp_{2n}}(\lambda+\frac{1}{2}(\epsilon_1+\cdots+\epsilon_n))$ is simple.
\end{enumerate}
\end{prop}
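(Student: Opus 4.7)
For part (a), observe that when $\dot z=0$ the bracket $[X_{\epsilon_j},X_{-\epsilon_i}]=\delta_{ij}z$ acts trivially on $M(0,\lambda)$, so each $X_{-\epsilon_i}v_\lambda$ is killed by every $X_{\epsilon_j}$, while the $\sp_{2n}$-root vectors in $\mn_+$ send it either to $0$ or to other $X_{-\epsilon_k}v_\lambda$'s. Set $N:=\sum_i U(\mg_n)X_{-\epsilon_i}v_\lambda$; a weight count shows $v_\lambda\notin N$, so $N$ is proper. In the quotient $M(0,\lambda)/N$ the image $\bar v_\lambda$ is annihilated by all of $H_n$, and since $H_n$ is an ideal of $\mg_n$ one has $H_nU(\mg_n)=U(\mg_n)H_n$, so $H_n$ annihilates the entire quotient, in particular its unique simple quotient $L(0,\lambda)$.

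For part (b), I would use Proposition \ref{proposition3} to view $M_{\sp_{2n}}(\lambda')\otimes S$ as a $\mg_n$-module through $\phi_{\dot z}$. Let $w$ be the highest weight vector of $M_{\sp_{2n}}(\lambda')$ and $v_0:=t_1^{-1}\cdots t_n^{-1}\in S$. The key computation is that $t_iv_0$, $t_it_jv_0$, and $t_i\partial_jv_0$ all vanish in $S$ (any factor $t_it_i^{-1}=1$ lies in $\C[t_i]$, hence is zero in $\C[t_i^{\pm 1}]/\C[t_i]$), while $t_i\partial_iv_0=-v_0$. Combining with the formulas of Proposition \ref{proposition3} shows $w\otimes v_0$ is annihilated by $\mn_+$ and has $\mathfrak h_n$-weight $\lambda$ with $z$ acting as $\dot z$. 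By the universal property this yields a $\mg_n$-map $\Phi\colon M(\dot z,\lambda)\to M_{\sp_{2n}}(\lambda')\otimes_{\dot z} S$ sending $v_\lambda\mapsto w\otimes v_0$. Surjectivity is obtained in stages: iterating $X_{-\epsilon_i}\mapsto -\sqrt{\dot z}(1\otimes\partial_i)$ places every $w\otimes t^{-a}$ in the image; then each $X_{-\epsilon_i-\epsilon_j}$ and $X_{\epsilon_i-\epsilon_j}$ (for $i>j$) brings $(X\cdot w)\otimes S$ into the image modulo the subspace already reached, sweeping out the entire target.

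Injectivity follows from a character comparison. The $\mathfrak h_n$-weight of $t^{-a}$ under $\phi_{\dot z}$ is $\sum_i(-a_i+\tfrac12)\epsilon_i$, so $\mathrm{ch}(S)=\prod_i e^{-\epsilon_i/2}(1-e^{-\epsilon_i})^{-1}$; combining with the Verma character for $\sp_{2n}$ and $\lambda'=\lambda+\tfrac12\sum_i\epsilon_i$ gives
\begin{equation*}
\mathrm{ch}\bigl(M_{\sp_{2n}}(\lambda')\otimes_{\dot z} S\bigr)=e^\lambda\prod_{\alpha\in\Delta_+}(1-e^{-\alpha})^{-1}=\mathrm{ch}\,M(\dot z,\lambda),
\end{equation*}
so the surjection $\Phi$ between weight modules with matching finite weight-space dimensions is an isomorphism. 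The statement for $L(\dot z,\lambda)$ and the final biconditional both follow from the submodule correspondence $N\leftrightarrow N\otimes S$: by Proposition \ref{proposition3} the map $\phi_{\dot z}$ surjects onto $U(\sp_{2n})\otimes\md_n$, so every $\mg_n$-submodule of $M_{\sp_{2n}}(\lambda')\otimes_{\dot z}S$ is automatically a $U(\sp_{2n})\otimes\md_n$-submodule; and since $S$ is a simple $\md_n$-module with $\mathrm{End}_{\md_n}(S)=\C$, every such submodule has the form $N\otimes S$ for a unique $U(\sp_{2n})$-submodule $N\subset M_{\sp_{2n}}(\lambda')$.

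The most delicate step I expect is this last submodule correspondence: it requires a tensor-product Schur argument ruling out ``mixed'' submodules. The essential input is that the joint eigenspaces of $\{t_i\partial_i\}_{i=1}^n$ on $S$ are all one-dimensional, spanned by monomials $t^{-a}$, which forces $\mathrm{End}_{\md_n}(S)=\C$ and makes the standard tensor-product lemma apply.
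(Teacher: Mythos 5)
Your proof is correct, and the overall architecture (produce an explicit highest weight vector $w\otimes v_0$ in $M_{\sp_{2n}}(\lambda')\otimes_{\dot z}S$, identify this with the Verma module, then pass to the simple quotient via the $U(\sp_{2n})\otimes\md_n$ structure from Proposition~\ref{proposition3}) matches the paper's. The difference is in how the Verma isomorphism is established: the paper asserts without calculation that $M_{\sp_{2n}}(\lambda')\otimes_{\dot z}S$ is a highest weight module, then proves injectivity of the induced map out of $M(\dot z,\lambda)$ by a PBW argument --- writing $X\in U(\mn_-)$ as $\sum_\alpha f_\alpha\prod X_{-\epsilon_i}^{\alpha_i}$ with $f_\alpha\in U(\mn_-\cap\sp_{2n})$, then inducting on $|\alpha|$ using the fact that the images $\prod\partial_i^{\alpha_i}t_1^{-1}\cdots t_n^{-1}$ are linearly independent in $S$ and $M_{\sp_{2n}}(\lambda')$ is free over $U(\mn_-\cap\sp_{2n})$. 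You instead prove surjectivity explicitly (by peeling off $\partial_i$'s to reach $w\otimes S$, then the $\sp_{2n}$ root vectors to reach $(U(\mn_-\cap\sp_{2n})w)\otimes S$) and conclude by the character identity
\begin{equation*}
\mathrm{ch}\bigl(M_{\sp_{2n}}(\lambda')\otimes_{\dot z}S\bigr)=e^{\lambda'}\prod_{\alpha\in\Delta^+_{\sp_{2n}}}(1-e^{-\alpha})^{-1}\cdot\prod_{i}e^{-\epsilon_i/2}(1-e^{-\epsilon_i})^{-1}=e^\lambda\prod_{\alpha\in\Delta_+}(1-e^{-\alpha})^{-1}=\mathrm{ch}\,M(\dot z,\lambda).
\end{equation*}
This trades the PBW bookkeeping for a one-line dimension count, and as a bonus your submodule-correspondence formulation ($\mg_n$-submodules $\leftrightarrow$ $U(\sp_{2n})\otimes\md_n$-submodules $\leftrightarrow$ $N\otimes S$ with $N\subset M_{\sp_{2n}}(\lambda')$, using $\mathrm{End}_{\md_n}(S)=\C$) makes the final ``Consequently'' about simplicity of Verma modules immediate rather than a separate observation. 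For part (a) your argument is the same as the paper's, with the minor stylistic variation that you kill $H_n$ on the cyclic quotient $M(0,\lambda)/N$ via $H_nU(\mg_n)=U(\mg_n)H_n$ rather than on $L(0,\lambda)$ directly via simplicity; both are fine.
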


\begin{proof}(a) In $M(\z, \lambda)$ we have  \begin{equation}\label{hw}X_{\e_j} X_{-\e_k}v_\lambda=X_{\e_i+\e_j} X_{-\e_k}v_\lambda=0, \end{equation} and
\begin{equation*}X_{\e_i-\e_j} X_{-\e_k}v_\lambda=-\delta_{ik} X_{-\e_j}v_\lambda,\end{equation*} yielding that
$$\sum_{k=1}^n U(\mg_n)X_{-\e_k}v_\lambda= \sum_{k=1}^n U(\mn_ -)X_{-\e_k}v_\lambda,$$ which is a proper submodule of
$M(\z, \lambda)$. So $X_{-\e_k}v_\lambda\in K(\z, \lambda)$, i.e,  $X_{-\e_k}v_\lambda=0$ in $L(\z, \lambda)$, for any $k\in\{1,\dots,n\}$.
By the fact that $H_n$ is an ideal of $\mg_n$  and the simplicity of $L(\z, \lambda)$,  we have $H_n L(\z, \lambda)=0$.

(b)  Let $v_{\lambda'}$ be a  highest weight vector of $M_{\sp_{2n}}(\lambda')$. It is not hard to  see that ${M}_{\sp_{2n}}(\lambda')
\otimes_{\z}S_{\z}$ is a highest weight $\mg_n$-module with highest weight $\lambda'-\frac{1}{2}(\epsilon_1+\cdots+\epsilon_n)$.
Next we prove that $${M}_{\sp_{2n}}(\lambda')
\otimes_{\z}S_{\z}\simeq M(\z, \lambda'-\frac{1}{2}(\epsilon_1+\cdots+\epsilon_n)).$$

Take  $X\in U(\mn_-)$ with
\begin{equation}\label{X} X  (v_{\lambda'}\otimes_{\z} t_1^{-1} t_2^{-1}\cdots  t_n^{-1})=0. \end{equation}
Using a PBW basis, we can write
$$X=\sum _{\alpha\in \Z_+^n}f_{\alpha}(X_{\e_i-\e_j}, X_{-\e_i-\e_j})\prod_{1\le i\le n}X_{-\e_i}^{\alpha_i}.$$
where $f_{\alpha}(X_{\e_i-\e_j}, X_{-\e_i-\e_j})\in U(\mn_-)\cap U(\sp_{2n})$.
 From (\ref{X}) and Proposition 3, by induction on    $|\alpha|$ we  deduce that
 $f_{\alpha}(X_{\e_i-\e_j}, X_{-\e_i-\e_j})=0$ for all $\alpha$. Thus $X=0$, yielding that
 ${M}_{\sp_{2n}}(\lambda')
\otimes_{\z}S \simeq M(\z, \lambda'-\frac{1}{2}(\epsilon_1+\cdots+\epsilon_n))$.

 Since ${L}_{\sp_{2n}}(\lambda')
\otimes S$ is a simple $U(\sp_{2n})\otimes \md_n$-module, from the definition of  ${L}_{\sp_{2n}}(\lambda')\otimes_{\z}S $  and Proposition 3 we
  see that it is a   simple $\mg_n$-module. Thus the second isomorphism in (b) follows.
\end{proof}

\subsection{The characterizations of $\mathcal{O}_{\mg_n}[\z]$ }

We denote by $\mathcal{O}_{\mg_n}[\z]$ the full subcategory of
$\mathcal{O}_{\mg_n} $ consisting of all modules $M\in \mathcal{O}_{\mg_n}$ such that
$z$ acts on $M$ as a scalar $\z$.

\begin{lemma}\label{eigen}Let $ M\in \mathcal{O}_{\mg_n}[\z]$ with   $\dot{z}\ne0$. Let $v$ be a nonzero
weight vector in $M$. Then $U(H_n)v=V_1\oplus \cdots \oplus V_k$, with $V_i\cong S_{\z}$ for each $i\in\{1,\dots, k\}$.
\end{lemma}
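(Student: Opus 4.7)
Via the identification $\phi_\z$ of Proposition~\ref{proposition3}, $U(H_n)/\langle z-\z\rangle$ becomes the Weyl algebra $\md_n$ and the Shale--Weil module $S_\z$ becomes the standard simple $\md_n$-module $S\cong\C[\partial_1,\dots,\partial_n]\cdot\bar 1$ (with $t_i\bar 1=0$), which is in fact the unique simple $\md_n$-module on which every $t_i$ acts locally nilpotently. So the lemma amounts to showing that the cyclic $\md_n$-module $W:=U(H_n)v=\md_n\cdot v$ is isomorphic to a finite direct sum of copies of $S$. The first observation is that each $t_i$ acts locally nilpotently on $v$: since $\phi_\z(X_{\epsilon_i})=\sqrt{\z}\,t_i$ and $X_{\epsilon_i}\in\mn_+$, the powers $X_{\epsilon_i}^k v$ lie in the distinct weight spaces $M_{\lambda+k\epsilon_i}$, and the local $\mn_+$-finiteness of $M$ forces $X_{\epsilon_i}^{N_i}v=0$ for some $N_i$. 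Commutativity of the $t_i$'s propagates this to all of $W$, and setting $W_0:=\bigcap_i\ker(t_i|_W)$, we obtain $W_0\ne 0$ via $w_0:=t_1^{a_1}\cdots t_n^{a_n}v$ with $(a_1,\dots,a_n)$ chosen componentwise maximal so that $w_0\ne 0$.

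The heart of the proof is a Kashiwara-type isomorphism: the multiplication map
$$\Phi:\ W_0\otimes_\C\C[\partial_1,\dots,\partial_n]\longrightarrow W,\qquad w\otimes p(\partial)\mapsto p(\partial)w,$$
should be an isomorphism of $\md_n$-modules, where the $\md_n$-action on the domain is given by $\partial_i(w\otimes p)=w\otimes\partial_i p$ and $t_i(w\otimes p)=w\otimes[t_i,p]$. For injectivity, if $\sum_\alpha\partial^\alpha w_\alpha=0$ with $w_\alpha\in W_0$, applying $t^\beta$ for any $\beta$ maximal in the support isolates $(-1)^{|\beta|}\beta!\,w_\beta=0$, using the identity $t^\beta\partial^\beta w=(-1)^{|\beta|}\beta!\,w$ for $w\in W_0$ (immediate from $[t_i,\partial_j]=-\delta_{ij}$ together with $t_i w=0$). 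For surjectivity, introduce the projector
$$\pi(x):=\sum_{\gamma\in\Z_+^n}\frac{1}{\gamma!}\partial^\gamma t^\gamma x,$$
which is a finite sum by local nilpotency and which one checks (via $[t_j,\partial^\gamma]=-\gamma_j\partial^{\gamma-e_j}$) maps into $W_0$ and restricts to the identity on $W_0$. The identity $w=\pi(w)-\sum_{|\gamma|\ge 1}\frac{1}{\gamma!}\partial^\gamma(t^\gamma w)$, combined with the strictly smaller $t$-nilpotency degree of each $t^\gamma w$ when $|\gamma|\ge 1$, allows an induction on nilpotency degree to place every $w\in W$ inside $\C[\partial_1,\dots,\partial_n]\cdot W_0$.

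Finally, the cyclicity of $W$ forces $W_0$ to be finite-dimensional: writing $v=\sum_{i=1}^m w_i\otimes p_i(\partial)$ under $\Phi^{-1}$ gives $W=\md_n\cdot v\subseteq\mathrm{span}(w_1,\dots,w_m)\otimes\C[\partial]$, whence $W_0=\mathrm{span}(w_1,\dots,w_m)$ has dimension at most $m$. With $k:=\dim W_0$, the isomorphism $\Phi$ yields $W\cong W_0\otimes S\cong S^{\oplus k}$, which translates back via $\phi_\z^{-1}$ to the claimed decomposition $U(H_n)v=V_1\oplus\cdots\oplus V_k$ with $V_i\cong S_\z$. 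The main technical obstacle I anticipate is the Kashiwara-type step --- most notably the verification that $\pi$ lands in $W_0$ and the induction establishing surjectivity of $\Phi$ --- while the other steps (injectivity via the maximal-$\beta$ argument, finiteness from cyclicity) are routine.
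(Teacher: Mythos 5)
Your proof is correct, and it takes a genuinely different structural route from the paper's, although both rest on the same computational kernel, the projector-type series $\sum_\gamma\frac{1}{\gamma!}\partial^\gamma t^\gamma$. The paper first shows that $\md_n v$ has finite composition length (using $\dim\C[t_1\partial_1,\dots,t_n\partial_n]v<\infty$, which comes from the finite-dimensionality of the weight spaces of $M$), and then argues by induction on that length: at each step it splits the sequence $0\to V\to\md_n v\to S_{\z}\to 0$ by explicitly lifting a highest-weight vector of the quotient to a highest-weight vector $w+v'$ in $\md_n v$, and the single-variable correction $v'=\sum_{i\ge 1}\frac{1}{i!}\partial_1^it_1^iw$ it uses is precisely (a one-variable slice of) your $\pi$. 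Your argument is instead a one-shot Kashiwara-type lemma: you prove directly that $W\cong W_0\otimes\C[\partial_1,\dots,\partial_n]$, then deduce $\dim W_0<\infty$ from cyclicity of $W$ rather than from finite composition length or finite-dimensional weight spaces. This is cleaner and isolates what is really being proved, namely that any cyclic $\md_n$-module on which every $t_i$ acts locally nilpotently is a finite direct sum of copies of $S$. One small imprecision worth fixing: you justify propagating local nilpotency of $t_i$ from $v$ to all of $W$ by ``commutativity of the $t_i$'s,'' but commutativity of the $t_i$'s alone does not carry nilpotency past a $\partial_i$. Either observe that $\{w\in W : t_i^{N}w=0 \text{ for some } N\}$ is an $\md_n$-submodule (which uses $[t_i,\partial_i]=-1$), or more simply run the same weight-space argument on an arbitrary weight vector of $W\subseteq M$, since $X_{\epsilon_i}\in\mn_+$ is locally nilpotent on every vector of $M$.
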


\begin{proof} Since $U(H_n)/\langle z-\z\rangle \cong \md_n $, we can consider that $U(H_n)v=\md_n v$ which is still a weight module with respect to $\mh$. Since $M$ has finite dimensional weight spaces, we see that
 $$\dim \C[t_1\partial_1,\cdots,t_n\partial_n]v< \infty.$$
 Note that as vectors spaces
 $$\md_n=\big(\C[t_1,\cdots,t_n] +\C[\partial_1,\cdots,\partial_n] \big)\otimes  \C[t_1\partial_1,\cdots,t_n\partial_n].$$
 Thus the $\md_n$-module $\md_n v$  has finite composition length $k$. We will prove the statement by induction on $k$.

 If $k=1$, then $\md_n v$ is a simple highest weight
  module, i.e., $\md_n v\cong S_{\z}$. Let $V$ be a maximal $\md_n$-submodule of $\md_n v$. By the
  induction hypothesis, $V=V_1\oplus \cdots \oplus V_{k-1}$, with $V_i\cong S_{\z}$ for each $1 \leq i\leq k-1$,  and there is an
  epimorphism $\gamma:\md_n v\rightarrow S_{\z}$ such that $\ker(\gamma)=V$. Since $ \md_n v/V$ is a highest weight module over $\md_n$, we take a highest weight vector  $\bar w\in \md_n v/V$ with $w\in  \md_n $ being a weight vector. Then   $t_iw\in V$  for any $1\leq i\leq n$.

\noindent{\bf Claim:} There exists $v'\in V$ such that $t_i(w+v')=0$ for any $1\leq i\leq n$.

If $t_1w\neq 0$, we choose the smallest integer $l$ such that $t_1^{l+1}w=0$.
From $$t_1\partial_1^it_1^i=\partial_1^it_1^{i+1}-i\partial_1^{i-1}t_1^i,$$ we have that
$$\aligned t_1 (w+ \sum_{i=1}^l \frac{1}{i!}\partial_1^it_1^iw)&=t_1w+ \sum_{i=1}^l \frac{1}{i!}\partial_1^it_1^{i+1}w
-\sum_{i=1}^l \frac{1}{(i-1)!}\partial_1^{i-1}t_1^iw\\
&=\sum_{i=1}^{l-1} \frac{1}{i!}\partial_1^it_1^{i+1}w
-\sum_{i=2}^l \frac{1}{(i-1)!}\partial_1^{i-1}t_1^iw\\
&=0.\endaligned $$
Note that $\sum_{i=1}^l \frac{1}{i!}\partial_1^it_1^iw\in V$. By repeating the above arguments, we can have
$v'\in V$ such that $t_i(w+v')=0$ for any $1\leq i\leq n$. 
The claim follows. 

Then  $\md_n(w+v')\cong S_{\z}$.
By the simplicity of $\md_n(w+v')$, we see that $\md_n v=V\oplus \md_n(w+v')\cong V\oplus S_{\z}$. 
 Then the proof is complete.
\end{proof}

\begin{prop}\label{mainlemma} Let $\dot{z}\ne0$. For any $ M\in \mathcal{O}_{\mg_n}[\z]$,
there is an $N\in \mathcal{O}_{\sp_{2n}}$   such that
$M \cong  {N}\otimes_{\z}  S$.
\end{prop}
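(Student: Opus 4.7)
The plan is to exploit the isomorphism $\phi_{\z}: U(\mg_n)/\langle z-\z\rangle \to U(\sp_{2n}) \otimes \md_n$ of Proposition \ref{proposition3} to endow $M$ with two commuting actions: one of $\iota_1(U(\sp_{2n}))$, and one of $\iota_2(\md_n)\cong \md_n$. The candidate for $N$ is
$$N := \{v \in M \mid t_i v = 0 \text{ for } 1 \le i \le n\} = \{v \in M \mid X_{\e_i} v = 0 \text{ for } 1 \le i \le n\},$$
which is stable under the $\iota_1(U(\sp_{2n}))$-action since the images of $\iota_1$ and $\iota_2$ commute in $U(\sp_{2n}) \otimes \md_n$.

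First I would upgrade Lemma \ref{eigen} from a single cyclic submodule to all of $M$. Since $M$ is a weight module it is spanned by weight vectors, each $\md_n v$ is $S$-isotypic by Lemma \ref{eigen}, and the class of $S$-isotypic $\md_n$-modules is closed under arbitrary sums and quotients. Hence $M \cong \bigoplus_{\alpha} V_{\alpha}$ as a $\md_n$-module with every $V_{\alpha} \cong S$. In each copy of $S$ the joint kernel of the $t_i$'s is the one-dimensional line $\C \cdot \overline{t_1^{-1}\cdots t_n^{-1}}$, so $N$ meets each $V_{\alpha}$ in a line, and the evaluation map
$$\Phi : N \otimes S \longrightarrow M, \qquad v \otimes \bigl(D\,\overline{t_1^{-1}\cdots t_n^{-1}}\bigr) \longmapsto D v$$
is well-defined (the annihilator of $\overline{t_1^{-1}\cdots t_n^{-1}}$ in $\md_n$ is the left ideal $\sum_i \md_n t_i$, which kills $N$) and is a $\md_n$-module isomorphism. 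Because $\iota_1(U(\sp_{2n}))$ commutes with the $\md_n$-action, $\Phi$ is also $U(\sp_{2n})$-equivariant, hence a $\mg_n$-module isomorphism via $\phi_{\z}$; this gives $M \cong N \otimes_{\z} S$.

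Next I would verify that $N \in \mathcal{O}_{\sp_{2n}}$. The $\mh$-weight decomposition on $N$ is induced from that of $M$ (the $\iota_1$-weights differ from the $\mg_n$-weights by a shift of $\tfrac12(\e_1+\cdots+\e_n)$). For local $\mn_+$-finiteness over $\sp_{2n}$, the formulas in Proposition \ref{proposition3} show that for $v \in N$ each positive $\sp_{2n}$-root vector $X_{\e_i-\e_j}$ ($i<j$) or $X_{\e_i+\e_j}$ of $\mg_n$ acts on $v$ through its $\iota_1$-summand alone (the Weyl-algebra correction terms $t_i\partial_j$ and $t_it_j$ send $N$ into $N$, using $t_i\partial_j = \partial_j t_i$ for $i\neq j$ and $t_i t_j v = t_j t_i v = 0$), while $X_{\e_i}$ annihilates $N$. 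Consequently the $U(\mn_+)$-orbit of $v$ in $\mg_n$ coincides with its $U(\mn_+\cap\sp_{2n})$-orbit and stays finite-dimensional.

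The main technical obstacle is finite generation of $N$ over $U(\sp_{2n})$. Given a finite $U(\mg_n)$-generating set $F \subset M$, applying $\Phi^{-1}$ expresses each $f \in F$ as a finite sum $\sum_j D_{j,f}\,n_{j,f}$ with $n_{j,f} \in N$ and $D_{j,f} \in \md_n$. Let $F_N := \{n_{j,f}\}$, a finite subset of $N$. Then $M$ is generated by $F_N$ under the joint $U(\sp_{2n})$- and $\md_n$-actions, so setting $W := U(\sp_{2n})\cdot F_N \subseteq N$ we have $\md_n\,W = M$; pulling back through $\Phi$ and using that the $\md_n$-action affects only the $S$-tensor factor gives $W\otimes S = N\otimes S$, forcing $W = N$. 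Together with the isomorphism of the second paragraph this completes the plan.
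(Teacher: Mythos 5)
Your approach matches the paper's own proof: take $N$ to be the joint kernel of the operators $t_i$ on $M$ (equivalently, the space of $\md_n$-highest-weight vectors), observe that it carries an $\sp_{2n}$-action commuting with $\md_n$, and recover $M\cong N\otimes_{\z}S$. Your write-up is in fact more complete than the paper's: you construct the evaluation map $\Phi$ explicitly, and you check all three defining axioms of $\mathcal{O}_{\sp_{2n}}$ for $N$ (weight decomposition, local $\mn_+$-finiteness, finite generation), steps the paper leaves entirely implicit.

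There is, however, one error that recurs through the write-up and should be fixed: you treat $\iota_1$ as though it were the tensor-factor embedding $X\mapsto X\otimes 1$. These are two different embeddings of $U(\sp_{2n})$ into $U(\sp_{2n})\otimes\md_n$. By Proposition~\ref{proposition3}, $\iota_1(X_{\e_i-\e_j})=X_{\e_i-\e_j}\otimes 1+1\otimes t_i\partial_j$ carries a Weyl-algebra correction term, and similarly for the other generators. As a result the images of $\iota_1$ and $\iota_2$ do \emph{not} commute (together with $\phi_{\z}$ they present the non-abelian $U(\mg_n)/\langle z-\z\rangle$, in which $\sp_{2n}$ does not centralize $H_n$), and $N$ is \emph{not} stable under $\iota_1(U(\sp_{2n}))$: for $v\in N$ and $i\neq j$, $\iota_1(X_{-\e_i-\e_j})v=(X_{-\e_i-\e_j}\otimes1)v-(1\otimes\partial_i\partial_j)v$, and $t_i\bigl(\partial_i\partial_j v\bigr)=-\partial_j v$ need not vanish, so the second term generally lies outside $N$. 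Every assertion you make becomes true once $\iota_1(U(\sp_{2n}))$ is replaced throughout by $U(\sp_{2n})\otimes 1$: that subalgebra genuinely commutes with $1\otimes\md_n$, genuinely preserves $N$, and is precisely the $\sp_{2n}$-module structure on $N$ that the notation $N\otimes_{\z}S$ presupposes. With that single substitution your map $\Phi$, the equivariance claim, the local $\mn_+$-finiteness check and the finite-generation argument all go through and faithfully fill the gaps in the paper's terse proof.
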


\begin{proof}
Consider $M$ as a $U(\sp_{2n})\otimes \md_n$-module via the isomorphism $\phi_{\z}$ in Proposition 3 where actually $\phi_{\z}(U(H_n)/\langle z-\z\rangle)=\md_n$. By Lemma \ref{eigen}, we know that $M$ is a direct sum of irreducible highest weight $\md_n$-modules  all isomorphic.

Let $V$ be the subspace of $M$ consisting all highest weight vectors over $\md_n$. We see that $U(\sp_{2n})V=V$. On the other hand we have $M=V\otimes S$. Thus going back to the $\mg_n$-module
$M$ we see that $M=V\otimes_{\z}S$.\end{proof}

\begin{theorem}\label{prop721} If $\z\neq 0$,
then the functor
\begin{displaymath}
-\otimes_{\z}   S: \mathcal{O}_{\sp_{2n}}\to \mathcal{O}_{\mg_n}[\dot z],
\end{displaymath}
 is an equivalence of the two categories.
\end{theorem}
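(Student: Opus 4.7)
The plan is to exhibit an explicit quasi-inverse by assigning to each $M\in\mathcal{O}_{\mg_{n}}[\dot z]$ the subspace of its Heisenberg highest weight vectors,
\[
F(M):=\{v\in M\mid X_{\e_{i}}v=0,\ 1\le i\le n\},
\]
equipped with the $U(\sp_{2n})$-action transported across $\phi_{\z}$; i.e.\ $X\in\sp_{2n}$ acts on $F(M)$ by $\phi_{\z}^{-1}(X\otimes 1)$. Under any decomposition $M\cong V\otimes_{\z}S$ furnished by Proposition~\ref{mainlemma}, $F(M)$ corresponds to $V\otimes w_{0}$ with $w_{0}=t_{1}^{-1}\cdots t_{n}^{-1}$, and this twisted $\sp_{2n}$-action is the natural action on $V$. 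I would then show that the unit
\[
\eta_{M}\colon M\xrightarrow{\ \sim\ }F(M)\otimes_{\z}S\quad\text{(from Proposition~\ref{mainlemma})}
\]
and counit
\[
\varepsilon_{N}\colon F(N\otimes_{\z}S)\xrightarrow{\ \sim\ }N,\qquad v\otimes w_{0}\mapsto v,
\]
are natural isomorphisms.

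Well-definedness of $G:=-\otimes_{\z}S$ on objects: for $N\in\mathcal{O}_{\sp_{2n}}$, the tensor $N\otimes_{\z}S$ is a weight module on which $z$ acts by $\dot z$ directly from the formula for $\phi_{\z}$. Finite generation follows from the claim that whenever $\{v_{1},\ldots,v_{r}\}$ generates $N$ over $U(\sp_{2n})$, the set $\{v_{i}\otimes w_{0}\}$ generates $N\otimes_{\z}S$ over $U(\mg_{n})$: indeed $X_{-\e_{k}}=-\sqrt{\dot z}(1\otimes\partial_{k})$ sweeps out $v_{i}\otimes S$, and combining with $\phi_{\z}^{-1}(x\otimes 1)$ for $x\in U(\sp_{2n})$ sweeps out all of $N\otimes S$. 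Local $\mn_{+}^{\mg}$-finiteness follows because each $X\in\mn_{+}^{\mg}$ has $\phi_{\z}(X)=A\otimes 1+1\otimes B$ with $A\in\mn_{+}^{\sp}\cup\{0\}$ and $B$ realising the native $\mg_{n}$-action of $X$ on the Shale--Weil module $S\in\mathcal{O}_{\mg_{n}}$, so the tensor of a finite-dimensional $\mn_{+}^{\sp}$-stable neighbourhood in $N$ with a finite-dimensional $\mn_{+}^{\mg}$-stable neighbourhood in $S$ is $\mn_{+}^{\mg}$-stable. Symmetrically, for $F(M)\in\mathcal{O}_{\sp_{2n}}$: it is a weight module by construction; local $\mn_{+}^{\sp}$-finiteness rests on the key identity that for $v\in F(M)$ the twisted action reduces to the native one, e.g.\
\[
\phi_{\z}^{-1}(X_{\e_{i}+\e_{j}}\otimes 1)v=X_{\e_{i}+\e_{j}}v-\tfrac{1}{\dot z}X_{\e_{i}}X_{\e_{j}}v=X_{\e_{i}+\e_{j}}v
\]
(and analogously for $X_{\e_{i}-\e_{j}}$), so the new $U(\mn_{+}^{\sp})$-orbit of $v$ is contained in the native (finite-dimensional) $U(\mn_{+}^{\mg})$-orbit; finite generation of $F(M)$ follows by decomposing a finite $U(\mg_{n})$-generating set of $M$ as $\sum v_{j,l}\otimes s_{j,l}$ with $v_{j,l}\in F(M)$ and observing that $V_{0}:=U(\sp_{2n})\{v_{j,l}\}\subsetneq F(M)$ would yield a proper $\mg_{n}$-submodule $V_{0}\otimes S$ containing all generators.

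The remaining categorical data is routine: $F$ is functorial because any $\mg_{n}$-morphism commutes with each $X_{\e_{i}}$ and therefore restricts to $F(M)\to F(M')$, compatibly with the transferred $\sp_{2n}$-structure. The counit $\varepsilon_{N}$ is an isomorphism since $\ker(t_{1},\ldots,t_{n})\cap S=\C w_{0}$ and the twisted $\sp_{2n}$-action on $N\otimes w_{0}$ reduces to the original action on $N$ by direct inspection of $\phi_{\z}$. The unit $\eta_{M}$ is the isomorphism of Proposition~\ref{mainlemma}, natural in $M$ because both ingredients, the subspace $F(M)$ and the $\md_{n}$-decomposition, are canonical. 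The principal obstacle is keeping straight the bookkeeping of the twist: the native embedding $\sp_{2n}\hookrightarrow\mg_{n}$ does \emph{not} preserve $F(M)$ (for instance $X_{-\e_{i}-\e_{j}}(v\otimes w_{0})=X_{-\e_{i}-\e_{j}}v\otimes w_{0}-v\otimes \partial_{i}\partial_{j}w_{0}$ lies outside $F(M)\otimes w_{0}$ because $\partial_{i}\partial_{j}w_{0}\notin\C w_{0}$), so one must first transfer the $\sp_{2n}$-structure across $\phi_{\z}$ and only afterwards verify that on positive root vectors this twisted action happens to coincide with the native one.
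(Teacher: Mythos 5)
Your proof is correct, but it takes a different route from the paper's. The paper establishes the equivalence by checking full faithfulness and essential surjectivity directly: from Proposition~\ref{proposition3} together with the fact that $\mathrm{End}_{\md_n}(S)=\C$ one gets $\mathrm{Hom}_{\mg_n}(V\otimes_{\z}S,\,W\otimes_{\z}S)\cong\mathrm{Hom}_{\sp_{2n}}(V,W)$, and Proposition~\ref{mainlemma} gives essential surjectivity; the equivalence is then immediate. You instead construct an explicit quasi-inverse $F(M)=\ker(X_{\e_1},\dots,X_{\e_n})$ carrying the transported action $\phi_{\z}^{-1}(\cdot\otimes 1)$, and verify that the unit and counit are natural isomorphisms. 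Your version surfaces two details that the paper leaves implicit. First, you actually check that the two functors land in the asserted categories (finite generation and local $\mn_+$-finiteness for $N\otimes_{\z}S$, and the analogous conditions for $F(M)$); the paper does not spell this out. Second, you track the twist carefully: on $F(M)$ the transported $\sp_{2n}$-action coincides with the native one precisely on $\mn_+\cap\sp_{2n}$ (your identity $\phi_{\z}^{-1}(X_{\e_i+\e_j}\otimes 1)v=X_{\e_i+\e_j}v$, and the analogous computation for $X_{\e_i-\e_j}$ using $[X_{\e_i},X_{-\e_j}]=0$ for $i\ne j$), but not on $\mh$ or the negative part; for instance $\phi_{\z}^{-1}(h_i\otimes 1)$ acts on $F(M)$ as $h_i+\tfrac12$, which is exactly the weight shift $\lambda'=\lambda+\tfrac12\sum_i\e_i$ appearing in Proposition~\ref{thm-highest}. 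The cost of your approach is having to verify naturality of $\eta$ and $\varepsilon$, which the paper's Hom-counting argument absorbs for free.
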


\begin{proof}Denote $\mathcal{F}=-\otimes _{\z} S$. Note that the automorphisms of simple $\md_n$-modue $S$ is $\C^*$. From Proposition 5, we see that
the homomorphism $$\mathcal{F}_{V,W}: \text{Hom}_{\sp_{2n}}(V,W)\rightarrow\text{Hom}_{\mg_n}(\mathcal{F}(V),\mathcal{F}(W))$$ is isomorphism,
 for any modules $V,W\in \mathcal{O}_{\sp_{2n}}$. Again
from Proposition \ref{mainlemma},  $\mathcal{F}$ is an equivalence of the two categories.
\end{proof}

\noindent{\bf Remark:}  When $\z=0$, the Verma module $M(0, \lambda)$ has infinite composition length. Then subcategory
$\mathcal{O}_{\mg_n}[0]$ has more complicated structure. For $n=1$, the discussion of the category
$\mathcal{O}_{\mg_n}[0]$  can be referred to \cite{DLMZ}.  Some properties  of  the category $\mathcal{O} $ over
 a deformation of the symplectic oscillator algebra were given in \cite{GK}.

\section{Classification of simple Harish-Chandra $\mg_n$-modules}\label{sec4}

Before giving the classification of simple Harish-Chandra modules  over  $\mg_n$, we will first study  generalized highest weight modules.
\subsection{Generalized highest weight modules}\label{GHWD}
Denote
$$\aligned \mg_n^+=& (\oplus_{ 1\leq i, j\leq n} \C X_{\e_i+\e_j})\oplus (\oplus_{1\leq i \leq n} X_{\e_i}), \\
\mg_n^-=&(\oplus_{ 1\leq i, j\leq n} \C X_{-\e_i-\e_j})\oplus (\oplus_{1\leq i \leq n} X_{-\e_i}),\\
\mg_n^0= & \mh_n\oplus (\oplus_{ 1\leq i\neq j\leq n} \C X_{\e_i-\e_j}).\endaligned$$

Then we have a   different  triangular decomposition
$$ \mg_n=\mg_n^-\oplus \mg_n^0\oplus \mg_n^+.$$

Let $V$ be a simple  $\mg_n^0\oplus \mg_n^+$-module with $\mg_n^+V=0$ and $zv=\z v$ for any
$v\in V$, where $\z\in\C$.  Note that $\mg_n^0=\gl_n\oplus \C z$.  So  $V$ is actually a simple $\gl_n$-module.
The induced $\mg_n$-module
$$M(\z, V)=U(\mg_n)\otimes_{U(\mg_n^0\oplus \mg_n^+)}V$$ is called a {\it generalized Verma module} over $\mg_n$.
We denote its unique
irreducible quotient module by  $L(\z, V)$. By the analogous construction, we have the $\sp_{2n}$-modules
 $M_{\sp_{2n}}(V) $ and $ L_{\sp_{2n}}(V)$, for any simple $\gl_n$-module $V$.

Similar to Proposition \ref{thm-highest}, we have the next result.

\begin{prop}\label{GHM} \begin{enumerate}[$($a$)$]
\item If $\z=0$, then $H_nL(\z, V)=0$.
\item If $\z\neq 0$, then we have that  $$M(\z, V)\cong  {M}_{\sp_{2n}}(V)\otimes_{\z}S, $$
and $$L(\z, V)\cong  {L}_{\sp_{2n}}(V)\otimes  _{\z}S.$$
\end{enumerate}
\end{prop}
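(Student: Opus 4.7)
The plan is to follow, step by step, the strategy of Proposition \ref{thm-highest}, replacing the one-dimensional top $\C v_\lambda$ by the simple $\gl_n$-module $V$ throughout.

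For part (a), assume $\dot z=0$. The first step is to show that the $\mg_n$-submodule $W$ of $M(\dot z,V)$ generated by $\sum_k X_{-\e_k}V$ is proper. The cleanest way is to equip $M_{\sp_{2n}}(V)$ with a $\mg_n$-module structure by letting $H_n$ act as $0$; this is compatible because $[\sp_{2n},H_n]\subset H_n$ and the central relation $[X_{\e_i},X_{-\e_i}]=z$ reduces to $0=0$ when $\dot z=0$. The universal property of $M(\dot z,V)$ then yields a surjection $M(\dot z,V)\twoheadrightarrow M_{\sp_{2n}}(V)$ whose kernel contains $W$, forcing $W\subset K(\dot z,V)$. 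Consequently, in $L(\dot z,V)$ the image of $V$ is killed by each $X_{\pm\e_k}$ (the $X_{\e_k}$ already annihilate $V$ since $X_{\e_k}\in\mg_n^+$) and by $z$. Since $H_n$ is an ideal, the subspace $\{v\in L(\dot z,V):H_nv=0\}$ is a nonzero $\mg_n$-submodule, hence all of $L(\dot z,V)$ by simplicity.

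For part (b), assume $\dot z\neq 0$ and identify $U(\mg_n)/\langle z-\dot z\rangle$ with $U(\sp_{2n})\otimes\md_n$ via Proposition \ref{proposition3}. Fix $s_0=t_1^{-1}\cdots t_n^{-1}\in S$. I would first check that on $V\otimes s_0$ the subalgebra $\mg_n^+$ acts by zero: $X_{\e_i}\mapsto\sqrt{\dot z}\,t_i$ and $X_{\e_i+\e_j}\mapsto t_it_j$ kill $s_0$ in $S$, while the $\sp_{2n}$-components coming from $\phi_{\dot z}$ kill $V$. Similarly $\mg_n^0$ preserves $V\otimes s_0$ because $t_i\partial_j s_0=0$ in $S$ for $i\neq j$ (the $i$-th factor collapses via $t_i\cdot t_i^{-1}=0$), and up to the $-\tfrac12\sum\epsilon_i$ shift absorbed into the identification of $V$ on the two sides, the $\mg_n^0$-action on $V\otimes s_0$ matches the top of $M(\dot z,V)$. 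The universal property of $M(\dot z,V)$ then yields a $\mg_n$-homomorphism $\psi:M(\dot z,V)\to M_{\sp_{2n}}(V)\otimes_{\dot z}S$. Surjectivity follows from the fact that $V\otimes s_0$ generates the target: $X_{-\e_i}\mapsto-\sqrt{\dot z}\,\partial_i$ together with $\C[\partial_1,\ldots,\partial_n]s_0=S$ exhausts the $S$-factor, and then $X_{-\e_i-\e_j}$ reaches all of $M_{\sp_{2n}}(V)\otimes S$ modulo terms already obtained.

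Injectivity of $\psi$ is the main technical step, and I expect it to be the principal obstacle. Mirroring the proof of Proposition \ref{thm-highest}(b), I would take any $X\in U(\mg_n^-)$ together with a nonzero $v\in V$ such that $X(v\otimes s_0)=0$, expand $X=\sum_\alpha f_\alpha\prod_i X_{-\e_i}^{\alpha_i}$ via PBW with $f_\alpha\in U(\mg_n^-\cap\sp_{2n})$, and run induction on $|\alpha|$: the vectors $\prod_i\partial_i^{\alpha_i}s_0$ are linearly independent weight vectors of $S$ lying in distinct weight spaces, so matching weights inside $M_{\sp_{2n}}(V)\otimes S\cong U(\mg_n^-\cap\sp_{2n})\otimes V\otimes S$ (using the PBW decomposition of the $\sp_{2n}$-Verma) forces $f_\alpha v=0$ for every $\alpha$, and descent on $|\alpha|$ yields $f_\alpha=0$ in turn. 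Once $\psi$ is an isomorphism, the second statement is immediate: $L_{\sp_{2n}}(V)\otimes S$ is a tensor product of simples over $U(\sp_{2n})\otimes\md_n$, hence simple as a $\mg_n$-module via $\phi_{\dot z}$, and it is a highest-weight quotient of $M(\dot z,V)\cong M_{\sp_{2n}}(V)\otimes_{\dot z}S$ with the same top, so it must equal $L(\dot z,V)$.
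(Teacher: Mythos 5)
Your proposal is correct and, for part (b), follows essentially the same route as the paper: identify $U(\mg_n)/\langle z-\dot z\rangle$ with $U(\sp_{2n})\otimes\md_n$, note that $M_{\sp_{2n}}(V)\otimes_{\dot z}S$ is generated by the $\mg_n^0$-stable, $\mg_n^+$-annihilated top $V\otimes s_0$ (hence a quotient of $M(\dot z,V)$), then argue injectivity via a PBW expansion and an induction on $|\alpha|$; the simplicity of $L_{\sp_{2n}}(V)\otimes_{\dot z}S$ is concluded from the outer tensor product of simples over $U(\sp_{2n})\otimes\md_n$, exactly as the paper does. Two small remarks on (b). First, for a generalized Verma you should expand a general element $\sum_k X_k\otimes v_k$ with $v_k\in V$ linearly independent, as the paper does, rather than a single $X\otimes v$; the argument is unchanged. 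Second, your phrase ``matching weights'' is not quite the right invariant, since applying $f_\alpha$ through $\phi_{\dot z}$ produces cross-terms $-1\otimes\partial_i\partial_j$ that scatter across several $S$-weight spaces; what does isolate the leading terms cleanly is the grading of $S$ by total degree (with $s_0$ in degree $0$): each $\prod X_{-\e_i}^{\alpha_i}$ raises it by exactly $|\alpha|$, whereas the $\sp_{2n}$-part of $f_\alpha$ raises it by $0$, so looking at the minimal $|\alpha|$ with $f_\alpha\neq 0$ and projecting onto $S$-degree $|\alpha|$ gives $\sum_{|\alpha|\text{ min}}f_\alpha v\otimes\prod\partial_i^{\alpha_i}s_0=0$, whence $f_\alpha v=0$ and $f_\alpha=0$ by PBW in $M_{\sp_{2n}}(V)$. (The paper's own ``by induction on $|\alpha|$'' is equally terse, so this is just making the sketch honest.)

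For part (a) you take a genuinely different, and in my view cleaner, route than the paper. The paper cites the proof of Proposition \ref{thm-highest}(a), i.e.\ the direct bracket computation showing $U(\mg_n)\sum_k X_{-\e_k}V=U(\mg_n^-)\sum_k X_{-\e_k}V$ is a proper submodule. You instead endow $M_{\sp_{2n}}(V)$ with the trivial $H_n$-action (legitimate because $H_n$ is an ideal and $\dot z=0$), invoke the universal property to get a nonzero surjection $M(0,V)\twoheadrightarrow M_{\sp_{2n}}(V)$, and read off that its kernel — hence $K(0,V)$ — contains $\sum_k X_{-\e_k}V$. Both arguments are valid; yours avoids the case-by-case bracket bookkeeping, while the paper's is self-contained at the level of structure constants. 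The final step (that $\{v\in L: H_nv=0\}$ is a nonzero submodule, hence all of $L$) is the same in both.
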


\begin{proof}(a) The proof is similar to that for Proposition 4 (a). We omit the details.

(b)    It is not hard to  see that ${M}_{\sp_{2n}}(V)
\otimes_{\z}S$ is a generalized highest weight $\mg_n$-module.
Next we prove that $$M(\z, V)\cong  {M}_{\sp_{2n}}(V)\otimes_{\z}S.$$

It is enough to show that, for any linearly independent $v_k\in V$ ($k=1,2,\cdots,r$) and   $X_k\in U(\mg_n^-)$ with
\begin{equation}\label{X_k} \sum_{k=1}^rX_kv_k =0,\end{equation}
we can deduce that $X_k=0$ for all $k$.
Using a PBW basis, we can write
$$X_k=\sum _{\alpha\in \Z_+^n}f^{(k)}_{\alpha}( X_{-\e_i-\e_j})\prod_{1\le i\le n}X_{-\e_i}^{\alpha_i}.$$
where $f^{(k)}_{\alpha}( X_{-\e_i-\e_j})\in U(\mg_n^-)\cap U(\sp_{2n})$.
 From (\ref{X_k}) and Proposition 3, by induction on    $|\alpha|$ we  deduce that
 $f^{(k)}_{\alpha}(  X_{-\e_i-\e_j})=0$ for all $\alpha$. Thus all $X_k=0$, yielding that
   ${M}_{\sp_{2n}}(V)\otimes_{\z}S$ is a generalized Verma module over $\mg_n$, i.e.,
   $M(\z, V)\cong  {M}_{\sp_{2n}}(V)\otimes_{\z}S.$

 Since ${L}_{\sp_{2n}}(V)\otimes_{\z}S$ is a simple $U(\sp_{2n})\otimes \md_n$-module, from the definition of  ${L}_{\sp_{2n}}(V)\otimes_{\z}S$  and Proposition \ref{proposition3} we
  see that it is a   simple $\mg_n$-module. Thus the second isomorphism in (b) follows.
\end{proof}

By Lemma 3.3 in \cite {DMP}, we have the following Lemma.

\begin{lemma} \label{LN}Let $M$ be a weight $\mg_n$-module with all finite dimensional weight spaces, $\alpha\in \Delta_{\sp_{2n}}$.
The following two conditions are equivalent.
\begin{enumerate}[$($a$)$]
\item  For each $\lambda\in \text{Supp}M$, the space $M_{\lambda+k\alpha}$ is zero for all but finitely many $k>0$.
\item The element $X_\alpha$ acts locally nilpotently on $M$.
\end{enumerate}
\end{lemma}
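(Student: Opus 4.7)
The plan is to reduce the equivalence to a standard $\sl_2$-module fact by restricting to the $\alpha$-string. For $\alpha \in \Delta_{\sp_{2n}}$, the vectors $X_\alpha$, $X_{-\alpha}$, and $h_\alpha := [X_\alpha, X_{-\alpha}]$ span a copy $\sl_2^{(\alpha)} \subset \sp_{2n}$. For each fixed $\lambda \in \mathrm{Supp}(M)$ the subspace $N_\lambda := \bigoplus_{k\in \Z} M_{\lambda+k\alpha}$ is an $\sl_2^{(\alpha)}$-submodule of $M$, and since distinct $k$'s yield distinct $h_\alpha$-eigenvalues $\lambda(h_\alpha) + 2k$, the $h_\alpha$-weight spaces of $N_\lambda$ are exactly the $M_{\lambda+k\alpha}$, so in particular they remain finite dimensional. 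Conditions (a) and (b) can be tested on each $N_\lambda$, so we are reduced to the following $\sl_2$-statement: an $\sl_2$-module with finite dimensional weight spaces has $e$ locally nilpotent iff its support is bounded above.

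The direction (a)$\Rightarrow$(b) is immediate, because $M$ is spanned by weight vectors and (a) forces $X_\alpha^K v = 0$ for $K$ large whenever $v \in M_\lambda$.

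For (b)$\Rightarrow$(a) I would argue by contradiction. Suppose $M_{\lambda + k\alpha} \ne 0$ for an infinite sequence $k_1 < k_2 < \cdots$, and pick nonzero $w_j$ in each. Local nilpotence of $X_\alpha$ allows us to replace $w_j$ by a top-of-string vector $u_j := X_\alpha^{m_j} w_j \in \ker X_\alpha \cap M_{\lambda + k_j'\alpha}$ with $k_j' \ge k_j$; passing to a subsequence we may assume $k_1' < k_2' < \cdots$. Setting $\nu_j := \lambda(h_\alpha) + 2k_j'$, the standard $\sl_2$-identity
\begin{equation*}
X_\alpha^m X_{-\alpha}^m u_j \;=\; m!\,\nu_j(\nu_j-1)\cdots(\nu_j-m+1)\,u_j,
\end{equation*}
valid because $X_\alpha u_j = 0$, shows (after discarding the finitely many $j$ where the product vanishes) that $v_j := X_{-\alpha}^{k_j'} u_j$ is a nonzero element of $M_\lambda$. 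A linear relation $\sum_{j=1}^n c_j v_j = 0$ is then broken by applying $X_\alpha^{k_n'}$: each $v_j$ with $j<n$ lies in the cyclic submodule $U(\sl_2^{(\alpha)}) u_j$, on which $X_\alpha^{k_j'+1}$ already vanishes, while $X_\alpha^{k_n'} v_n$ is a nonzero multiple of $u_n$; descending induction yields $c_n = \cdots = c_1 = 0$, so the $v_j$ form an infinite linearly independent subset of $M_\lambda$, contradicting $\dim M_\lambda < \infty$.

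The main technical point is the subsequence step: the scalar $\prod_{i=0}^{k_j'-1}(\nu_j - i)$ vanishes only when $\lambda(h_\alpha) + 2k_j' \in \{0,1,\ldots,k_j'-1\}$, a condition that pins $k_j'$ into a finite range once $\lambda(h_\alpha)$ is fixed and thus trims only finitely many indices. With that in hand, the above is exactly the content of Lemma 3.3 of DMP, which the paper cites.
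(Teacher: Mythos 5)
Your proof is correct, and it is more detailed than what the paper offers: the paper does not actually prove this lemma but simply invokes Lemma~3.3 of Dimitrov--Mathieu--Penkov (the reference \cite{DMP} cited just before the statement). What you have done is reconstruct that $\sl_2$-string argument in a self-contained way, which is a legitimate and arguably preferable route. The reduction to the $\alpha$-string $N_\lambda$, the use of $e^m f^m u = m!\,\nu(\nu-1)\cdots(\nu-m+1)u$ on a top-of-string vector, and the observation that the scalar vanishes only for finitely many $k'_j$ are exactly the mechanism one expects behind the DMP citation.

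Two small points of precision. First, you set $h_\alpha := [X_\alpha, X_{-\alpha}]$ and then write the eigenvalues as $\lambda(h_\alpha) + 2k$, which tacitly assumes $\alpha(h_\alpha)=2$. With the explicit root vectors listed in the paper this normalization need not hold (e.g. $[X_{2\epsilon_i}, X_{-2\epsilon_i}] = 4h_i$), but since $\alpha(h_\alpha) \neq 0$ always holds in a semisimple Lie algebra, one can rescale $X_{-\alpha}$ to restore the standard $\sl_2$ relations without affecting any of the vanishing/nonvanishing conclusions; worth a sentence. Second, the phrase ``on which $X_\alpha^{k_j'+1}$ already vanishes'' reads as if $X_\alpha^{k_j'+1}$ annihilates the entire cyclic submodule $U(\sl_2^{(\alpha)})u_j$, which is false; what you need (and what your weight computation actually gives) is only that $X_\alpha^{k_j'+1} v_j = 0$, because that vector would sit in $h_\alpha$-eigenvalue $\nu_j + 2$, above the top weight $\nu_j$ of that submodule. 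With those phrasings tightened, the argument is complete.
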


\begin{lemma} \cite{DMP}\label{Nil} If $M$ is a simple weight $\mg_n$-module,
then the action of $X_{\alpha}$ on $M$ is injective or locally  nilpotent, for any $\alpha\in \Delta$.
\end{lemma}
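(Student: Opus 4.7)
The plan is to show that the set
\[
N_\alpha(M) := \{v \in M : X_\alpha^k v = 0 \text{ for some } k \in \mathbb{N}\}
\]
is a $\mg_n$-submodule of $M$, and then invoke simplicity. Since $\ker(X_\alpha) \subseteq N_\alpha(M)$, if $X_\alpha$ is not injective then $N_\alpha(M) \neq 0$, so by simplicity $N_\alpha(M) = M$, meaning $X_\alpha$ acts locally nilpotently; otherwise $N_\alpha(M) = 0$ and $X_\alpha$ is injective.

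To prove $N_\alpha(M)$ is stable under $\mg_n$, I first observe that $\mathrm{ad}(X_\alpha)$ is nilpotent on the finite-dimensional Lie algebra $\mg_n$. Indeed, $\mh_n$ acts semisimply by $\mathrm{ad}$ on $\mg_n$ with the finite root decomposition $\mg_n = \mh_n \oplus \bigoplus_{\beta \in \Delta} \mg_{n,\beta}$, and $\mathrm{ad}(X_\alpha)$ shifts the $\mh_n$-grading by $\alpha \neq 0$; since $\Delta \cup \{0\}$ is finite, some power $(\mathrm{ad} X_\alpha)^N$ lands in zero weight spaces that are not roots, hence vanishes. Fix such an $N$.

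Next, for any $Y \in \mg_n$ and any $v \in N_\alpha(M)$ with $X_\alpha^k v = 0$, I use the standard identity
\[
X_\alpha^m Y = \sum_{j=0}^{m} \binom{m}{j} (\mathrm{ad}\, X_\alpha)^j(Y)\, X_\alpha^{m-j},
\]
which in view of $(\mathrm{ad} X_\alpha)^j(Y) = 0$ for $j \geq N$ truncates to a sum over $0 \leq j \leq N-1$. Taking $m \geq k + N - 1$ forces $m - j \geq k$ in every surviving term, so $X_\alpha^{m-j} v = 0$ and hence $X_\alpha^m(Yv) = 0$. Therefore $Yv \in N_\alpha(M)$, establishing that $N_\alpha(M)$ is a $\mg_n$-submodule. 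The dichotomy then follows immediately from the simplicity of $M$.

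The only step requiring any care is the nilpotency of $\mathrm{ad}(X_\alpha)$ for the \emph{new} roots $\alpha = \pm \epsilon_i$ coming from the Heisenberg part, since standard references usually treat only the semisimple case. However, this is immediate from the finiteness of $\Delta$ and the explicit brackets recorded in Section \ref{sec2}: $\mathrm{ad}(X_{\pm\epsilon_i})$ sends $\sp_{2n}$ into $H_n$, sends $\C^{2n} \subset H_n$ into $\C z$, and annihilates $\C z$, so in fact $(\mathrm{ad} X_{\pm\epsilon_i})^3 = 0$. This is the only mildly nontrivial verification; everything else is a direct application of the binomial expansion argument familiar from the semisimple setting \cite{DMP}.
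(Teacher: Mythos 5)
The paper gives no proof of this lemma and simply cites \cite{DMP}; your argument is exactly the standard one underlying that citation, so there is no divergence to report. Your reduction to showing $N_\alpha(M)$ is a submodule, the use of the commutation identity $X_\alpha^m Y=\sum_j\binom{m}{j}(\mathrm{ad}\,X_\alpha)^j(Y)\,X_\alpha^{m-j}$, and the verification that $\mathrm{ad}(X_\alpha)$ is nilpotent on $\mg_n$ (with the explicit check $(\mathrm{ad}\,X_{\pm\epsilon_i})^3=0$ on $\sp_{2n}\oplus\C^{2n}\oplus\C z$ for the non-semisimple roots) are all correct, and you correctly observe that only simplicity of $M$, not the weight hypothesis, is actually used.
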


Next, we will introduce a lemma which will be used in studying the structure of generalized highest modules.

\begin{lemma}  \label {lemma-nil}Let $M$ be any simple Harish-Chandra
$\mg_n$-module, on which  $z$ acts as some $\z\in\C$.
\begin{enumerate}[$($a$)$]
\item  If $X_{2\e_i}$ acts locally nilpotently on $M$, then so does $X_{\e_i}$.
If $\z\neq 0$,  then  the converse is also true.
\item  If $X_{2\e_i}$  and $X_{2\e_j}$ acts  locally nilpotently on $M$ with $i\neq j$, then $X_{\e_i+\e_j}$ also acts  locally nilpotently on $M$.
\end{enumerate}
 \end{lemma}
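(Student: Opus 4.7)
The plan is to prove each implication by contradiction, using Lemma \ref{Nil} (the dichotomy: in a simple weight module each $X_\alpha$ acts either injectively or locally nilpotently), Lemma \ref{LN}, and the explicit Heisenberg and $\sl_2$-identities attached to $\{X_{\pm\e_i},z\}\subset H_n$ and to the $\sl_2$-triples $\{X_{\pm 2\e_k},h_k\}\subset\sp_{2n}$. For (a), the implication ``$X_{2\e_i}$ locally nilpotent $\Rightarrow$ $X_{\e_i}$ locally nilpotent'' is nearly immediate: if $X_{\e_i}$ were injective, any nonzero $v\in M_\lambda$ would yield $M_{\lambda+2k\e_i}\ne 0$ for all $k$, contradicting Lemma \ref{LN} applied to the $\sp_{2n}$-root $2\e_i$. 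For the converse, assuming $\z\ne 0$ and $X_{2\e_i}$ injective, I choose a nonzero $v_0\in\ker X_{\e_i}\cap M_\mu$ (available since $X_{\e_i}$ is locally nilpotent). The iterates $u_b:=X_{2\e_i}^b v_0$ are nonzero (by injectivity) and still lie in $\ker X_{\e_i}$ because $[X_{2\e_i},X_{\e_i}]=0$. The return vector $w_b:=X_{-\e_i}^{2b}u_b$ lives in the same weight space $M_\mu$, and the standard Heisenberg identity $X_{\e_i}^{k}X_{-\e_i}^{k}u=k!\,\z^{k}u$ for $u\in\ker X_{\e_i}$ (where $\z\ne 0$ is crucial) yields $X_{\e_i}^{2b}w_b=(2b)!\,\z^{2b}u_b\ne 0$. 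Since $M_\mu$ is finite-dimensional, local nilpotence of $X_{\e_i}$ produces a uniform $N$ with $X_{\e_i}^N|_{M_\mu}=0$, and $2b\ge N$ gives the contradiction.

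For (b) I follow the same blueprint, now using the two commuting $\sl_2$-triples attached to $2\e_i$ and $2\e_j$. Since $X_{2\e_i},X_{2\e_j}$ commute and both act locally nilpotently, iterating the ``last nonzero in a killing chain'' argument produces a nonzero $v_0\in\ker X_{2\e_i}\cap\ker X_{2\e_j}\cap M_\mu$. Assuming $X_{\e_i+\e_j}$ injective, the vectors $u_{2c}:=X_{\e_i+\e_j}^{2c}v_0$ are nonzero and still lie in $\ker X_{2\e_i}\cap\ker X_{2\e_j}$, because $X_{\e_i+\e_j}$ commutes with both. I return to $M_\mu$ by the split $w_{2c}:=X_{-2\e_i}^{c}X_{-2\e_j}^{c}u_{2c}$; combining $[X_{2\e_j},X_{-2\e_i}]=0$ with the $\sl_2$-identity $X_{2\e_j}^{c}X_{-2\e_j}^{c}u=4^{c}c!\,(\alpha_j)^{\underline{c}}u$ for $u\in\ker X_{2\e_j}$ of $h_j$-weight $\alpha_j$ gives $X_{2\e_j}^{c}w_{2c}=4^{c}c!\,(\mu_j+2c)^{\underline{c}}\,X_{-2\e_i}^{c}u_{2c}$, and the analogous identity for $\{X_{\pm 2\e_i},h_i\}$ applied to $u_{2c}\in\ker X_{2\e_i}$ forces $X_{-2\e_i}^{c}u_{2c}\ne 0$. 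For $c>\max(|\mu_i|,|\mu_j|)$ the two falling-factorial products are nonzero, so $X_{2\e_j}^{c}w_{2c}\ne 0$; taking $c$ also larger than the uniform nilpotence order of $X_{2\e_j}$ on $M_\mu$ delivers the contradiction.

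The main obstacle is the choice of return operator in (b). The naive candidate $X_{-\e_i-\e_j}^{2c}$, attached to the $\sl_2$-triple $\{X_{\pm(\e_i+\e_j)},h_i+h_j\}$, forces the identity $E^{c}F^{c}$ to act on a non-vacuum vector, because $\ker X_{\e_i+\e_j}=0$ under the injectivity hypothesis; the coefficients produced by the $\sl_2$-formula are then mixed and cannot be shown to be nonvanishing. The split $X_{-2\e_i}^{c}X_{-2\e_j}^{c}$ instead exploits that $u_{2c}$ is simultaneously a vacuum for the two commuting $\sl_2$-triples $\{X_{\pm 2\e_i},h_i\}$ and $\{X_{\pm 2\e_j},h_j\}$, producing two independent clean $\sl_2$-computations whose product of falling factorials is provably nonvanishing for $c\gg 0$.
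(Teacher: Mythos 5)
Your proof is correct. For part~(a) the argument is essentially the one in the paper: the forward implication is the same application of Lemma~\ref{LN} to the $\sp_{2n}$-root $2\e_i$, and the converse rests on the same Heisenberg computation $X_{\e_i}^{k}X_{-\e_i}^{k}u=k!\,\z^{k}u$ for $u\in\ker X_{\e_i}$. The only difference is organizational: the paper packages the conclusion as an infinite linearly independent family in a single finite-dimensional weight space, while you package it as unboundedness of the nilpotence order of $X_{\e_i}$ on $M_\mu$; these are the same contradiction.

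For part~(b) your route is genuinely different from the paper's. The paper first shows (via the commutator relations and Lemma~\ref{LN}) that $X_{\e_j-\e_i}$ and then $X_{-2\e_i}$ act locally nilpotently, concludes that $M$ is integrable for the $\sl_2$ at $2\e_i$ so that $\mathrm{Supp}\,M$ is invariant under the reflection $r_{2\e_i}$, and then reflects the injectivity ray for $\e_i+\e_j$ into an injectivity ray for $\e_j-\e_i$, contradicting the nilpotence just established. You bypass the Weyl-group step entirely: you produce a simultaneous vacuum $v_0$ for the two commuting operators $X_{2\e_i},X_{2\e_j}$, push it up along $\e_i+\e_j$ to $u_{2c}$ (still a vacuum, since $X_{\e_i+\e_j}$ commutes with both), return to $M_\mu$ by the split lowering $X_{-2\e_i}^{c}X_{-2\e_j}^{c}$, and compute the two clean $\sl_2$ falling-factorial coefficients (with the normalization factor $4^c$ correctly accounting for $[X_{2\e_j},X_{-2\e_j}]=4h_j$ in the paper's conventions) to see that $X_{2\e_j}^{c}$ is nonzero on $M_\mu$ for all large $c$. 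What your approach buys: no need to first establish nilpotence of $X_{-2\e_i}$ or $X_{\e_j-\e_i}$, and no appeal to Weyl-group invariance of the support; it is a local, explicit $\sl_2$ computation. What the paper's approach buys: once one knows the reflection-invariance fact, the contradiction is immediate with no coefficient bookkeeping, and the same machinery reappears in the proof of Theorem~\ref{p14}. Your closing remark correctly identifies why the naive return operator $X_{-\e_i-\e_j}^{2c}$ would not work (its $\sl_2$ has no vacuum available), which is exactly the structural point that makes the split necessary.
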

 \begin{proof}(a)  If $X_{2\e_i}$ acts locally nilpotently on $M$, then by Lemma \ref{LN}, for each $\lambda\in \text{Supp}M$, the space $M_{\lambda+2k\e_i}$ is zero for all but finitely many $k>0$. Thus   $X_{\e_i}$ acts  locally nilpotently on $M$.

 Conversely,  if $\z\neq 0$, $X_{\e_i}$ acts nilpotently on $M$ but  $X_{2\e_i}$ acts injectively on $M$, then for each $\lambda\in \text{Supp}M$, the space $M_{\lambda+2k\e_i} \neq 0$ for any $k\in\mathbb{Z}_+$. By the nilpotence of $X_{\e_i}$,
we have an infinite sequence of positive integers $k_1<k_2<\cdots< k_l<\cdots$ and nonzero
$v_{k_l}\in M_{\lambda+k_l\e_i} $ such that $X_{\e_i} v_{k_l}=0$  for each $ l \in\mathbb{N}$. Using that $\z\neq 0$, we can see that
$ \{X_{-\e_i}^{k_l}v_{k_l}\mid l \in\mathbb{N}\} $ is a linearly independent infinite subset of $M_\lambda$, contradicting
 the fact that  $\dim M_\lambda<\infty$.

 (b) Suppose that  $X_{\e_i+\e_j}$  acts injectively on $M$.

 We claim that  both $X_{-2\e_i}$ and $X_{\e_j-\e_i}$ acts nilpotently on $M$.
  If $X_{\e_j-\e_i}$  acts injectively on $M$, then from $[X_{\e_j-\e_i}, X_{\e_i+\e_j}]=X_{2\e_j}$ and Lemma \ref{LN}, we
 see that $X_{2\e_j}$ acts injectively on $M$, contradicting the nilpotence of  $X_{2\e_j}$.
 If $X_{-2\e_i}$ acts injectively on $M$, then from $[X_{\e_i+\e_j}, X_{-2\e_i}]=2X_{\e_j-\e_i}$ and Lemma \ref{LN}, we
 see that $X_{\e_j-\e_i}$ acts injectively on $M$, contradicting the nilpotence of  $X_{\e_j-\e_i}$.

 Let $\ms_{i}$ be the subalgebra generated by $X_{2\e_i}$, $X_{-2\e_i}$ which is isomorphic to $\sl_2$.
 The   nilpotence of $X_{2\e_i}$ and  $X_{-2\e_i}$ tells us that $M$ is a sum of finite dimensional irreducible
 $\ms_{i}$-modules. So the weight set of $M$ is invariant under the action of the reflection $r_{2\e_i}$.
  Since $X_{\e_i+\e_j}$  acts injectively on $M$, for each $\lambda\in \text{Supp}M$,
  the space $M_{\lambda+k\e_i+k\e_j} \neq 0$ for any $k\in\mathbb{N}$. Then
  $$r_{2\e_i}(M_{\lambda+k\e_i+k\e_j})= M_{\lambda-\langle\lambda,\e_i\rangle\e_i+k\e_j-k\e_i} \neq 0,$$ for any $k\in\mathbb{N}$.
 This implies that $X_{\e_j-\e_i}$ acts injectively on $M$,  a contradiction. So (b) follows.
 \end{proof}

\begin{prop}\label{CGHM}Let $M$ be a simple  weight $\mg_n$-module with all finite dimensional weight spaces.
If $X_{2\epsilon_i}$ acts  locally nilpotently on $M$ for any $1\leq i\leq n$, then
$M\cong L(\z, V)$ for some simple Harish-Chandra $\gl_n$-module $V$.
\end{prop}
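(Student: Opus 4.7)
The plan is to show that the entire nilradical $\mg_n^+$ acts locally nilpotently on $M$, extract a simple $\gl_n$-submodule $V$ from the (nonzero) space $M^{\mg_n^+}$ of $\mg_n^+$-invariants, and then conclude $M\cong L(\z,V)$ via Frobenius reciprocity and the simplicity of $M$.

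First I would apply Lemma \ref{lemma-nil}: part (a) upgrades the hypothesized local nilpotence of $X_{2\e_i}$ to that of $X_{\e_i}$, and part (b) delivers local nilpotence of each $X_{\e_i+\e_j}$ with $i\ne j$. Thus every root vector of $\mg_n^+$ acts locally nilpotently on $M$. A direct root-sum computation shows $\mg_n^+$ is abelian. For any nonzero weight vector $v\in M$, local nilpotence in each positive direction together with the finite-dimensionality of the weight spaces of $M$ forces $U(\mg_n^+)v$ to be finite-dimensional; Engel's theorem applied to the commuting family of nilpotents $\mg_n^+$ on this subspace yields a nonzero common kernel vector, so $M^{\mg_n^+}\ne 0$. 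Since $[\mg_n^0,\mg_n^+]\subseteq \mg_n^+$, the invariant space $M^{\mg_n^+}$ is $\mg_n^0 = \gl_n\oplus\C z$-stable, and as a weight $\gl_n$-module inherits finite-dimensional weight spaces.

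The main step is to find a simple $\gl_n$-submodule $V\subseteq M^{\mg_n^+}$; I would argue that $M^{\mg_n^+}$ is itself $\gl_n$-simple. Given a proper nonzero $\gl_n$-submodule $V'\subsetneq M^{\mg_n^+}$, since $\mg_n^+V'=0$, $V'$ is a $\mp := \mg_n^0\oplus\mg_n^+$-submodule, so $U(\mg_n^-)V'$ is a nonzero $\mg_n$-submodule and must equal $M$ by simplicity. Filtering $U(\mg_n^-)V'$ by PBW degree in $\mg_n^-$, an inductive analysis of the $\mg_n^+$-action on the graded pieces --- tracking the mixed brackets $[\mg_n^+,\mg_n^-]$ which can have nonzero $\mg_n^+$-components (e.g.\ $[X_{\e_i+\e_j},X_{-\e_i}]\in \C X_{\e_j}$) and exploiting $\mg_n^+V'=0$ --- should yield $(U(\mg_n^-)V')^{\mg_n^+}=V'$, contradicting $V'\subsetneq M^{\mg_n^+}$. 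This filtration step (or an alternative reduction through the algebra isomorphism $\phi_\z$ of Proposition \ref{proposition3} tensor-factoring $M$ as a $U(\sp_{2n})\otimes \md_n$-module) is the main technical obstacle.

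Given a simple $\gl_n$-submodule $V\subseteq M^{\mg_n^+}$, the inclusion $V\hookrightarrow M$ is a $\mp$-module map, so Frobenius reciprocity produces a nonzero $\mg_n$-homomorphism $M(\z,V)=U(\mg_n)\otimes_{U(\mp)}V\to M$, surjective by the simplicity of $M$. Since $L(\z,V)$ is the unique simple quotient of $M(\z,V)$, we conclude $M\cong L(\z,V)$, completing the proof.
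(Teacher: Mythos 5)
Your proposal follows the same route as the paper: apply Lemma~\ref{lemma-nil} to propagate local nilpotence from the long-root vectors $X_{2\epsilon_i}$ to all of $\mg_n^+$, locate a nonzero $\mg_n^+$-invariant weight vector, observe that $M^{\mg_n^+}$ is a $\mg_n^0$-module, and conclude $M\cong L(\z,V)$. The paper's own proof is terse and simply asserts that $V=\{v\in M\mid \mg_n^+v=0\}$ is nonzero and is a \emph{simple} $\mg_n^0$-module, implicitly invoking the standard parabolic induction theorem for weight modules (Fernando, Dimitrov--Mathieu--Penkov, Mathieu).

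Two points in your write-up need more care, and both are exactly the points the paper also leaves to the reader. First, the claim that local nilpotence of each positive root vector plus finite-dimensional weight spaces ``forces $U(\mg_n^+)v$ to be finite-dimensional'' is not automatic: the nilpotence degree of $X_\alpha$ on a vector $w\in U(\mg_n^+)v$ can grow as $w$ ranges over that space, so you cannot bound the monomials $\prod X_{\alpha}^{a_\alpha}v$ just from local nilpotence on $v$ alone. The correct argument (Mathieu, Lemma 3.2--3.3; Fernando) grades $M$ by the eigenvalue of $h_1+\cdots+h_n$ and uses the uniform structure of $\mathrm{Supp}(M)$ for a \emph{simple} module, not just the single orbit $U(\mg_n^+)v$. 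Second, your PBW-filtration argument that $(U(\mg_n^-)V')^{\mg_n^+}=V'$ is, as you say yourself, incomplete: in a proper quotient of $M(\z,V')$ cancellations between the graded pieces can a priori produce new invariants, and ruling that out is precisely the content of the parabolic induction theorem (where simplicity of $M$ is used globally, not just at the top graded piece). Both your proposal and the paper are really quoting that theorem; if you state it explicitly, the short deduction at the end --- Frobenius reciprocity gives a surjection $M(\z,V)\twoheadrightarrow M$, hence $M\cong L(\z,V)$ --- is clean and matches the paper. Note also that for this last step you only need \emph{some} simple $\gl_n$-submodule of $M^{\mg_n^+}$, not its full simplicity, but existence of a simple submodule in an arbitrary weight $\gl_n$-module with finite-dimensional weight spaces is again not free; the parabolic theorem hands you simplicity of $M^{\mg_n^+}$ directly.
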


\begin{proof}  By Lemma \ref{lemma-nil}, $X_{\epsilon_i}, X_{\epsilon_i+ \epsilon_j} $ act  locally nilpotently on $M$ for any $1\leq i\leq n$.
Then we can find a weight vector $v$ which is annihilated by $\mg_n^+$. Let $V=\{v\in M\mid \mg_n^+v=0\}$ which has to be  a simple
$\mg_n^0$-module.
Consequently, $M\cong L(\z, V)$. Since $M$ is a Harish-Chandra $\mg_n$-module, $V$ has to be a Harish-Chandra $\gl_n$-module.
\end{proof}

\subsection{Simple Harish-Chandra modules}
In this subsection, we will give the classification of simple weight $\mg_n$-modules with finite dimensional weight spaces. Our method is quite different from those in \cite{M, DMP} where they broke the whole system into different subsets according to  actions' nilpotency of  root vectors.
We need only to use all the root vectors with respect to  long roots $\pm 2\epsilon_i$ as you will see next.

For any simple Harish-Chandra
$\mg_n$-module $M$,  we set $$\aligned I_M=& \{ 1\leq i \leq n\mid  X_{2\epsilon_i} |_M\   \text{and} \ X_{-2\epsilon_i}|_M \ \text{are injective}\},\\
F_M=& \{1\leq i \leq n\mid  X_{2\epsilon_i} |_M\   \text{and} \ X_{-2\epsilon_i}|_M \ \text{locally nilpotent}\},\\
F^+_M=& \{ 1\leq i \leq n\mid  X_{2\epsilon_i}|_M\   \text{is locally nilpotent}, \text{but}\   \ X_{-2\epsilon_i}|_M \ \text{is injective} \},\\
F^-_M=& \{ 1\leq i \leq n\mid  X_{2\epsilon_i}|_M\   \text{is injective}, \text{but}\   \ X_{-2\epsilon_i}|_M \ \text{is locally nilpotent} \}.
\endaligned$$
Then $I_M \cup F_M \cup F^+_M\cup F^-_M=\{1,\dots,n\}$.

Consider the multiplicative subset $$S(I)= \{ \prod_{i\in I_M}X_{-2\epsilon_i}^{r_i} \mid r_i\in\Z_+\}$$ which
  is an Ore subset of $U(\mg_n)$,  and hence we have the corresponding Ore
localization $U(\mg_n)_{S(I)}$,  see \cite{B, M}.  Assume that $I_M=\{i_1,\dots, i_k\}$.

For $b=(b_1,\dots, b_k)\in \C^k$, there is an isomorphism $\theta_b$ of $U(\mg_n)_{S(I)}$ such that
$$\theta_b(u)=\sum_{j_1,\dots,j_k\geq 0}\binom{b_1}{j_1}\cdots\binom{b_k}{j_k}
\text{ad}X_{-2\epsilon_{i_1}}^{j_1} \cdots\text{ad}X_{-2\epsilon_{i_k}}^{j_k} (u) X_{-2\epsilon_{i_1}}^{-j_1}\cdots X_{-2\epsilon_{i_k}}^{-j_k},$$
for any $u\in U(\mg_n)_S $. In particular, we can check that
\begin{equation}\label{Iso1}  \theta_b(X_{\epsilon_{-i_l}})=X_{-\epsilon_{i_l}}, \ \
 \theta_b(X_{\epsilon_{i_l}})=X_{\epsilon_{i_l}}+b_lX_{-\epsilon_{i_l}}X_{-2\epsilon_{i_l}}^{-1},\end{equation}
\begin{equation}\label{Iso} \theta_b(X_{2\epsilon_{i_l}})=X_{2\epsilon_{i_l}}+2b_l(b_l-1-2h_{i_l})X_{-2\epsilon_{i_l}}^{-1},\end{equation}
for any $1\leq l\leq k$.

For a $U(\mg_n)_{S(I)} $-module $M$, it can be twisted by $\theta_b$  to be a new
 $U(\mg_n)_{S(I)} $-module $M^{\theta_b}$.  As vector spaces $M^{\theta_b}= M$. For $v\in M$, $x\in\mg_n$,
 $x\cdot v=\theta_b(x)v $. The modules $M$ and  $M^{\theta_b}$ are said to be equivalent.

\begin{lemma} Let $M$ be a simple Harish-Chandra
$\mg_n$-module. If both $X_\alpha$ and  $ X_{-\alpha}$ act on $M$ injectively, then both act on $M$ bijectively.
\end{lemma}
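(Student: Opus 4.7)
The plan is to exploit the fact that every weight space of $M$ is finite dimensional, and reduce bijectivity of the global maps $X_{\pm\alpha}\colon M\to M$ to bijectivity between individual weight spaces. Since $X_\alpha$ is a root vector, it sends $M_\lambda$ into $M_{\lambda+\alpha}$ for every weight $\lambda$, and the global map is the direct sum of these ``string'' maps; so it suffices to prove that each $X_\alpha\colon M_\lambda\to M_{\lambda+\alpha}$ is a linear isomorphism (and similarly for $X_{-\alpha}$).

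The key step is a simple dimension comparison. Because $X_\alpha$ is assumed injective on all of $M$, its restriction to any weight space $M_\lambda$ has trivial kernel, so $X_\alpha\colon M_\lambda\to M_{\lambda+\alpha}$ is injective and therefore $\dim M_\lambda\le\dim M_{\lambda+\alpha}$. Applying the same observation to $X_{-\alpha}$, viewed now as an injective map $M_{\lambda+\alpha}\to M_\lambda$, we get $\dim M_{\lambda+\alpha}\le\dim M_\lambda$. Since $M$ is a Harish-Chandra module, both dimensions are finite, so they must coincide. An injective linear map between two finite-dimensional vector spaces of equal dimension is bijective, hence $X_\alpha\colon M_\lambda\to M_{\lambda+\alpha}$ and $X_{-\alpha}\colon M_{\lambda+\alpha}\to M_\lambda$ are both isomorphisms.

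Finally, as $\lambda$ ranges over $\mathrm{Supp}(M)\cup(\mathrm{Supp}(M)-\alpha)$ the above gives bijections on every relevant pair of weight spaces, so after taking the direct sum the global operators $X_{\alpha}$ and $X_{-\alpha}$ act bijectively on $M$.

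There is no serious obstacle here; the argument is essentially a one-line use of finite dimensionality of weight spaces plus injectivity in both directions. The only thing to keep track of is that we do not need the simplicity hypothesis for this particular statement, and that the Harish-Chandra assumption is precisely what makes the dimension comparison valid.
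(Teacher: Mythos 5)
Your proof is correct and is essentially identical to the paper's argument: compare dimensions of $M_\lambda$ and $M_{\lambda+\alpha}$ via the two injective maps $X_{\pm\alpha}$ between them and use finite dimensionality to conclude bijectivity on each weight space. Your side remark that simplicity is not actually needed is a valid observation, though it does not change the substance of the argument.
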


\begin{proof} Consider the two injective linear maps  $X_\alpha: M_\lambda\to M_{\lambda+\alpha}$ and $X_{-\alpha}: M_{\lambda+\alpha}\to M_\lambda$. Since $\dim M_\lambda<\infty$ and $\dim M_{\lambda+\alpha}<\infty$, we see that $\dim M_\lambda=\dim M_{\lambda+\alpha}$, and both $X_\alpha$ and  $ X_{-\alpha}$ act on $M$ bijectively.
\end{proof}

\begin{theorem}\label{p14}
Let $M$ be a simple Harish-Chandra
$\mg_n$-module, on which  $z$ acts as   zero. Then $H_nM=0$, i.e., $M$ is a simple Harish-Chandra $\sp_{2n}$-module.
\end{theorem}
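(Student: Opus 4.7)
The plan is to reduce the theorem to Propositions \ref{CGHM} and \ref{GHM}(a). Specifically, I would aim to show that $X_{2\epsilon_i}$ acts locally nilpotently on $M$ for every $i\in\{1,\dots,n\}$. Once this is established, Proposition \ref{CGHM} identifies $M$ with a simple generalized highest weight module $L(\dot z, V) = L(0, V)$ for some simple Harish--Chandra $\gl_n$-module $V$, and Proposition \ref{GHM}(a) then yields $H_n M = 0$, which is the desired conclusion.

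By Lemma \ref{Nil}, each $X_{2\epsilon_i}$ acts either injectively or locally nilpotently on $M$, so the above reduction succeeds once the injective case is ruled out. The essential new structural input here is that $\dot z = 0$ forces $[X_{\epsilon_i}, X_{-\epsilon_j}] = \delta_{ij} z = 0$ on $M$, so the operators $X_{\pm\epsilon_i}$ all commute pairwise as endomorphisms of $M$; effectively $H_n/\mathbb{C}z$ acts as an abelian algebra. Suppose for contradiction that $X_{2\epsilon_1}$ (say) acts injectively on $M$. I would split into two sub-cases according as $X_{-2\epsilon_1}$ acts injectively (by the lemma just before Theorem \ref{p14}, both $X_{\pm 2\epsilon_1}$ are then bijective on $M$) or locally nilpotently. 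In both sub-cases the strategy is to exploit the commutativity of $X_{\pm\epsilon_1}$ against the $\sp_{2n}$ bracket identities $[X_{\pm 2\epsilon_1}, X_{\mp\epsilon_1}] = X_{\pm\epsilon_1}$ and $[X_{\mp 2\epsilon_1}, X_{\pm\epsilon_1}^2] = 2 X_{\epsilon_1}X_{-\epsilon_1}$.

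The main obstacle is the quantitative extraction of a contradiction in these sub-cases. In the fully injective sub-case, the weight-preserving operator $T := X_{\epsilon_1}X_{-\epsilon_1}$ is bijective on each finite-dimensional $M_\lambda$ and commutes with both $X_{\pm\epsilon_1}$; a telescoping trace argument based on $[X_{-2\epsilon_1}, X_{\epsilon_1}^2] = 2T$ should force a linear growth in $k$ of $\mathrm{tr}(T|_{M_{\lambda+2k\epsilon_1}})$, which is incompatible with the constancy of weight multiplicities along the $\epsilon_1$-line that injectivity of $X_{\pm\epsilon_1}$ forces. In the mixed sub-case, local nilpotence of $X_{-2\epsilon_1}$ (equivalently, by the analogue of Lemma \ref{lemma-nil}(a) for negative roots, of $X_{-\epsilon_1}$) bounds $\mathrm{Supp}(M)$ below along $\epsilon_1$, producing a top vector $v$ with $X_{-\epsilon_1}v = 0$; commutativity of $X_{\pm\epsilon_1}$ then gives $X_{\epsilon_1}^k v \in \ker X_{-\epsilon_1}$ for all $k$, and the $\sl_2$-analysis of the triple $\langle X_{2\epsilon_1}, h_1, X_{-2\epsilon_1}\rangle$ combined with $[X_{2\epsilon_1}, X_{-\epsilon_1}] = X_{\epsilon_1}$ and Lemma \ref{lemma-nil}(b) should yield the contradiction. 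Once all $X_{2\epsilon_i}$ are confirmed locally nilpotent, the reduction at the start closes the argument.
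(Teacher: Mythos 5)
Your very first reduction is to a false statement, and the whole plan collapses there. You aim to show that \emph{all} $X_{2\epsilon_i}$ act locally nilpotently on $M$; but this is not a consequence of $\dot z = 0$. Indeed, take any simple cuspidal $\sp_{2n}$-module and view it as a $\mg_n$-module with $H_n$ acting by zero. Then $z$ acts as zero and the conclusion $H_n M = 0$ holds trivially, yet every long-root vector $X_{\pm 2\epsilon_i}$ acts \emph{injectively}. So there is simply no contradiction to derive in your ``fully injective'' sub-case --- it is a realizable case, not an impossible one. This is precisely why the paper records at the end that Theorem~\ref{p14} produces both parabolically induced \emph{and} cuspidal $\sp_{2n}$-modules. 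Note also that Lemma~\ref{lemma-nil}(a) only promises the converse direction (``$X_{\epsilon_i}$ locally nilpotent $\Rightarrow X_{2\epsilon_i}$ locally nilpotent'') when $\dot z \ne 0$, so when $\dot z = 0$ one can perfectly well have $X_{\epsilon_i} = 0$ on $M$ while $X_{2\epsilon_i}$ is injective. Your trace argument inherits the same problem: you assert that $T = X_{\epsilon_1}X_{-\epsilon_1}$ is bijective on weight spaces, but on the cuspidal example $T$ is identically zero, so the claimed linear growth of $\mathrm{tr}(T|_{M_{\lambda + 2k\epsilon_1}})$ never gets off the ground.

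The paper's actual strategy aims at a weaker, correct intermediate statement: all \emph{Heisenberg} root vectors $X_{\pm\epsilon_i}$ act locally nilpotently on $M$ (whereas the $\sp_{2n}$-root vectors $X_{\pm 2\epsilon_i}$ may well act injectively). To get this, the paper first twists $M$ by a Weyl-group inner automorphism to arrange $F_M^- = \emptyset$, then passes to the common kernel $N$ of $X_{\epsilon_i}, X_{2\epsilon_i}$ over $i \in F_M \cup F_M^+$, which is a uniformly bounded module over the smaller symplectic oscillator algebra $L \cong \mg_k$ built from the indices $i \in I_M$. It then applies the localization twist $\theta_b$ of equations (\ref{Iso1})--(\ref{Iso}) to force the long-root vectors of $L$ to act locally nilpotently on a simple $L$-subquotient, invokes Propositions~\ref{CGHM} and~\ref{GHM}(a) \emph{for $L$}, and translates back via (\ref{Iso1}) to conclude $X_{\pm\epsilon_i} N'' = 0$. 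The final step is then to observe that $W = \{v \in M \mid X_{\pm\epsilon_i}v = 0,\ \forall i\}$ is a nonzero $\mg_n$-submodule (using that $[\sp_{2n}, \C^{2n}] \subseteq \C^{2n}$ and $z$ acts as zero), so $W = M$ by simplicity. If you want to salvage your approach, you should redirect the nilpotence claim from $X_{2\epsilon_i}$ to $X_{\pm\epsilon_i}$, and you will then need some version of the paper's localization/restriction machinery to handle the indices in $I_M$ where the $\sp_{2n}$-long-roots genuinely act injectively.
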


\begin{proof}
We can now consider $M$   as a module over $\sp_{2n}\ltimes \C^{2n}$.
By the structure of the Weyl group of $\sp_{2n}$, there is an inner automorphism $\sigma$ of $\sp_{2n}$ such that
$$\sigma(\mg_{2\epsilon_i})=\mg_{2\epsilon_i}, i\in I_M \cup F_M \cup F^+_M,  $$
and $$\sigma(\mg_{2\epsilon_i})=\mg_{-2\epsilon_i}, \sigma(\mg_{-2\epsilon_i})=\mg_{2\epsilon_i},\ i\in  F^-_M.$$
 Since $\sigma$ is an inner automorphism,
 we can extend $\sigma$ to be an automorphism $\tilde{\sigma}$ of $\sp_{2n}\ltimes \C^{2n}$ such that
 $\tilde{\sigma} (\C^{2n})= \C^{2n}$. The module $M$ can be twisted by $\tilde{\sigma}$  to be a new
 $\mg_n$-module $M^{\tilde{\sigma}}$.  Then $H_n M=0$ if and only if $H_n\cdot M^{\tilde{\sigma}} =0$.
 For $M^{\tilde{\sigma}}$, $$ I_{M^{\tilde{\sigma}}}\cup F_{M^{\tilde{\sigma}}} \cup F^+_{M^{\tilde{\sigma}}}=\{1,\dots,n\} .$$

  For convenience,  we can assume that
$$ I_M \cup F_M \cup F^+_M=\{1,\dots,n\} .$$

Let $L$ be the subalgebra of $\mg_n$ generated by
 $$ X_{\epsilon_i-\epsilon_j}, X_{\epsilon_i+\epsilon_j}, X_{-\epsilon_i-\epsilon_j},X_{-\epsilon_i}, X_{\epsilon_i}, \ i,j \in I_M,$$
which is isomorphic to $\mg_k$.

If $F_M \cup F^+_M=\emptyset$, let $N=M$. Otherwise
set $$N= \{ v\in M \mid X_{\epsilon_i}v= X_{2\epsilon_i} v=0 \ \text{for any}\   i\in F_M \cup F^+_M\}.$$
Since $ X_{\epsilon_i}, X_{2\epsilon_i}$ act
locally nilpotent on $M$ and $[X_{\epsilon_i}, X_{2\epsilon_i}]=0$ for  any $i\in F_M \cup F^+_M$,
the space $N$ is nonzero.
Note that $N$ is a uniformly bounded $\sp_{2n}$-module. Then
   $N$ is a uniformly bounded $L$-module.
Choose a weight vector $v$ which is a common eigenvalue of $X_{2\epsilon_i}X_{-2\epsilon_i}$ for any $i\in I_M$.
Assume that
$X_{2\epsilon_i}X_{-2\epsilon_i}v=a_i v, a_i\in \C$. Let $w=X_{-2\epsilon_{i_1}}\cdots X_{-2\epsilon_{i_k}}v$. Take $b_i$ so that $2b_l(b_l-1-2h_{i_l})w=a_{i_l}w$.
Then by the formula (\ref{Iso}),
we see that $\theta_b ( X_{2\epsilon_i}) w=0$ for any  $i\in I_M$.
 Now consider the $L$-module $N^{\theta_b}$. We can check that
 $$N'=\{ v\in N^{\theta_b}\mid  X_{2\epsilon_i}\ \text{acts locally nilpotent on }\  N^{\theta_b}, \forall\ i\in I_M \}$$
 is a nonzero $L$-submodule of $N^{\theta_b}$. Being uniformly bounded, $N'$ has a simple $L$-submodule $N''$.
 By (a) in Lemma \ref{lemma-nil}, $X_{\epsilon_i} $ acts locally nilpotent on  $N''$ for any  $i\in I_M$.
  Then by Proposition \ref{CGHM}, $N''$ is a simple generalized highest
 weight $L$-module.
 By Proposition \ref{GHM} (a), we deduce that $$\theta_b(X_{\epsilon_i}) N''=\theta_b(X_{-\epsilon_i}) N''=0 ,\forall i\in I_M.$$  By the formula
 (\ref{Iso1}), we see that
 $$X_{\epsilon_i}N''=X_{-\epsilon_i}N''=0 ,\forall i\in I_M.$$
 Consequently $X_{\epsilon_i} N''=0$, and hence $X_{\epsilon_i}$ acts locally nilpotent on $M$ for any  $i=1,\dots,n$.
 Similarly, we can prove $X_{-\epsilon_i}$ acts locally nilpotent on $M$ for any  $i=1,\dots,n$. Then
 $$W=\{v\in M\mid X_{-\epsilon_i}v= X_{\epsilon_i}v=0,\ \forall\ i=1,\dots,n\}$$ is nonzero, which is a $\mg_n$-submodule of
 $M$. The simplicity of $M$ tells us that $W=M$, i.e., $H_n M=0$.
\end{proof}

It is known (see \cite{FGM}) that any irreducible weight module over  the  Weyl algebra $\md_{n}$ is isomorphic to some irreducible subquotient
of the module $$F(a):=t^a\C[t_1^{\pm 1}, \dots, t_n^{\pm1}],$$  for some $a=(a_1,\dots,a_n)\in\C^n,$  where $ t^a=t_1^{a_1}\cdots t_n^{a_n}$.
Since $F(a)\cong F(a+\beta)$ for any $\beta\in \Z^n$, we always  assume that $a_i=0$ if $a_i\in\Z$.
 Set $$\text{Int}_a=\{i \mid a_i\in\Z\}.$$ Assume that $\text{Int}_a= \{j_1,\dots, j_l\}$.
Let \begin{equation}G(a)=F(a)/(\md_n\C[t_{j_1}]+\cdots+ \md_n\C[t_{j_l}]).\end{equation}
We can see that $G(a)$ is a simple $\md_n$-module.

For any simple Harish-Chandra
$\mg_n$-module $M$,  we have defined the set
 $$I_M= \{ 1\leq i \leq n\mid  X_{2\epsilon_i} |_M\   \text{and} \ X_{-2\epsilon_i}|_M \ \text{are injective}\}.$$

\begin{theorem}\label{t15}
Let $M$ be a simple Harish-Chandra
$\mg_n$-module with a nonzero central charge, say $\dot{z}\in \C^*$.
\begin{enumerate}[$($a$)$]
\item If  $I_M=\{1,\dots,n\}$, then
$M$ is equivalent to $N\otimes_{\z} F(a) $ for some finite dimensional  simple
$\sp_{2n}$-module $N$ and   $a=(a_1,\dots,a_n)\in\C^n$ with $a_1,\dots,a_n\not\in\Z$.
\item If  $I_M=\emptyset$, then there is  a simple Harish-Chandra  $\gl_n$-module $V$ such that $M$ is equivalent (under an inner automorphism of $\mg_n$) to
the generalized highest weight module $L(\z, V)$ defined in  subsection \ref{GHWD}.
\item  If  $I_M$ is a nonempty proper subset of $\{1,\dots,n\}$,  then there is a  simple Harish-Chandra  $\gl_n$-module $V$ and $a=(a_1,\dots,a_n)\in\C^n$ such that
$M$ is equivalent (under an inner automorphism of $\mg_n$) to $L_{\sp_{2n}}(V)\otimes_{\z} G(a) $, where $\text{Int}_a=\{1,\dots,n\}\setminus  I_M$.
\end{enumerate}
\end{theorem}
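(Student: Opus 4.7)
The unifying strategy is to view $M$ through the isomorphism $\phi_{\z}$ of Proposition \ref{proposition3} as a module over $U(\sp_{2n}) \otimes \md_n$, and then to pin down its $\md_n$-component using the classification in \cite{FGM} of simple weight $\md_n$-modules as subquotients of $F(a)$. The three cases of the theorem correspond exactly to the three basic types of simple weight $\md_n$-modules: fully cuspidal $F(a)$ in (a), the Shale--Weil type $S$ in (b), and the mixed quotients $G(a)$ in (c). The machinery already assembled in the paper --- the decomposition results of Proposition \ref{GHM}, the reduction Proposition \ref{CGHM}, and the localization/twist $\theta_b$ of \eqref{Iso1}--\eqref{Iso} --- handles each case once the correct $\md_n$-factor has been recognized.

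Case (b) is the most direct. Since $I_M=\emptyset$, every index lies in $F_M\cup F_M^+\cup F_M^-$. The Weyl reflections $r_{2\epsilon_i}$ of $\sp_{2n}$ are inner automorphisms, and they extend to inner automorphisms of $\mg_n$ through the natural $\sp_{2n}$-action on $\C^{2n}$. Applying the product of such reflections for $i\in F_M^-$, I may assume $X_{2\epsilon_i}$ acts locally nilpotently on $M$ for every $i$. Proposition \ref{CGHM} then identifies $M$ with $L(\z,V)$ for a simple Harish-Chandra $\gl_n$-module $V$.

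Case (a) follows the localization/twist pattern used in the proof of Theorem \ref{p14}. Since each $X_{\pm 2\epsilon_i}$ is bijective, $M$ is already a $U(\mg_n)_{S(I)}$-module. I pick a weight vector $v$ that is a common eigenvector for all $X_{2\epsilon_i}X_{-2\epsilon_i}$, set $w=X_{-2\epsilon_1}\cdots X_{-2\epsilon_n}v$, and choose $b\in\C^n$ so that formula \eqref{Iso} gives $\theta_b(X_{2\epsilon_i})w=0$ for every $i$. The subspace of $M^{\theta_b}$ on which all $\theta_b(X_{2\epsilon_i})$ act locally nilpotently is then nonzero, and contains a simple $\mg_n$-submodule to which case (b) applies: it is a generalized highest weight module and hence, by Proposition \ref{GHM}(b), of the form $L_{\sp_{2n}}(V')\otimes_{\z}S$. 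Undoing the twist converts the Shale--Weil factor into $F(a)$ for an $a$ determined by $b$; the bijectivity of $X_{\pm 2\epsilon_i}$ on $M$ translates under $\phi_{\z}$ into bijectivity of $t_i^2$ and $\partial_i^2$ on the $\md_n$-factor, which forces $a_i\notin\Z$ for all $i$. Simplicity of $M$ then promotes this local decomposition to a global isomorphism $M\cong N\otimes_{\z}F(a)$, and finite-dimensionality of the weight spaces of $M$, combined with the cuspidality of $F(a)$, forces the $\sp_{2n}$-module $N$ to be finite-dimensional and simple.

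Case (c) is a hybrid. I first use inner automorphisms as in case (b) to arrange every index outside $I_M$ to lie in $F_M\cup F_M^+$, and then localize and twist only along the indices in $I_M$ as in case (a). The $I_M$-directions produce the cuspidal factor $F$ with $a_i\notin\Z$ for $i\in I_M$, while the remaining directions quotient this factor to $G(a)$ with $\text{Int}_a=\{1,\dots,n\}\setminus I_M$; on the $\sp_{2n}$-side, applying Proposition \ref{CGHM} together with Proposition \ref{GHM} yields a generalized highest weight module $L_{\sp_{2n}}(V)$. The hardest point, which I expect to occupy most of the work, is case (a): showing that the tensor-product decomposition obtained on a single simple submodule after twisting truly propagates to all of $M$ and is compatible with the $\sp_{2n}$-action. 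This requires combining an analogue of Proposition \ref{mainlemma} adapted to cuspidal $\md_n$-modules with the simplicity of $F(a)$ and careful bookkeeping of the inverse twist $\theta_b^{-1}$, and the same delicate step reappears in case (c).
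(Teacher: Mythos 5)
Your case (b) is essentially the paper's proof: twist $F_M^-$ to $F_M^+$ by inner automorphisms of $\sp_{2n}$ (extended to $\mg_n$) and invoke Proposition~\ref{CGHM}. For cases (a) and (c), however, you take a genuinely different route --- localize along $I_M$, twist by $\theta_b$ to reduce to the generalized-highest-weight situation, and then ``undo the twist'' --- whereas the paper proceeds directly: it picks a weight vector $v$ that is a common eigenvector of all $X_{\epsilon_i}X_{-\epsilon_i}$ (and, in case (c), killed by $X_{\epsilon_i}$ for $i\in F^+_M$), observes that $U(H_n)v$ is a simple weight $\md_n$-module, and applies the classification of \cite{FGM} to identify it with $F(a)$ (resp.\ $G(a)$); the tensor decomposition $M\cong N\otimes_{\z}F(a)$ then comes from the factorization $\phi_{\z}\colon U(\mg_n)/\langle z-\z\rangle\xrightarrow{\ \sim\ }U(\sp_{2n})\otimes\md_n$, the point being that $U(H_n)/\langle z-\z\rangle$ maps exactly onto the tensor factor $1\otimes\md_n$, so $\md_n$-submodules of $M$ are intrinsic and $\sp_{2n}$-equivariant. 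This avoids the twist altogether.

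The gap in your version is precisely the one you flagged, and it is real. The twist $\theta_b$ of $U(\mg_n)_{S(I)}$ does \emph{not} correspond under $\phi_{\z}$ to a tensor-factor automorphism $\psi_1\otimes\psi_2$ of $U(\sp_{2n})\otimes(\md_n)_{\mathrm{loc}}$: formulas \eqref{Iso1}--\eqref{Iso} involve $X_{-2\epsilon_{i_l}}^{-1}$ and $h_{i_l}$, and under $\phi_{\z}$ one has $X_{-2\epsilon_{i_l}}\mapsto X_{-2\epsilon_{i_l}}\otimes 1-1\otimes\partial_{i_l}^2$ and $h_{i_l}\mapsto h_{i_l}\otimes 1+1\otimes t_{i_l}\partial_{i_l}+\frac12$, both of which straddle the two tensor factors. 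Consequently, even if you identify a $U(\mg_n)$-submodule of $M^{\theta_b}$ with $L_{\sp_{2n}}(V')\otimes_{\z}S$, applying $\theta_b^{-1}$ does not return a module of the form $N\otimes_{\z}F(a)$ --- the $\sp_{2n}$-factor and the $\md_n$-factor get scrambled. A second unresolved point is promoting the conclusion from that single simple $U(\mg_n)$-submodule of $M^{\theta_b}$ to all of $M$; the analogous step in the paper's Theorem~\ref{p14} works because there one is only trying to show $H_n$ acts by zero (a condition that spreads by simplicity), not to extract a tensor decomposition. Also, a small correction: the injectivity you should invoke to force $a_i\notin\Z$ is that of $X_{\pm\epsilon_i}$ (equivalently $t_i$, $\partial_i$ via $\phi_{\z}$), obtained from Lemma~\ref{lemma-nil}(a), not that of $X_{\pm 2\epsilon_i}$, which does not map to $\pm t_i^2$, $\pm\partial_i^2$ under $\phi_{\z}$.
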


\begin{proof}  For each $i: 1\leq i\leq n$,  either $X_{\e_i}$ or $ X_{-\e_i}$  acts injectively on $M$. Otherwise $\z =0$ since there is no nontrivial finite-dimensional $\md_n$-module,   which contradicts our assumption.
So $F_M=\emptyset$.

(a) By Lemma \ref{lemma-nil}, both $X_{\e_i}$  and $X_{-\e_i}$  act injectively on $M$. Let
$v\in M$ be a nonzero weight vector which is a common eigenvector of $ X_{\e_i}X_{-\e_i}$ for all $i$. Then by the classification of simple weight modules over the Weyl algebra
 $\md_{n}$ (see \cite{FGM}),    there is an $a=(a_1,\dots,a_n)\in\C^n$  with $a_1,\dots,a_n\not\in\Z$
 such that $U(H_n)v \cong F(a)$. Note that
 $F(a)$ is a $U(H_n)$-module via the map $\theta_{\z}$. By the isomorphism $\phi_{\z}$
 in Proposition \ref{proposition3} and the dimension finiteness of weight spaces, we must have
 $M\cong  {N}\otimes_{\z} F(a) $ for some finite dimensional  simple
$\sp_{2n}$-module $N$.

(b)  Since  $I_M=\emptyset$, after twisting $M$ by an inner automorphism, we can assume that
$  F^+_M=\{1,\dots,n\}$, that is, $X_{2\epsilon_i}$ acts  locally nilpotently on $M$ for any $1\leq i\leq n$.
By Proposition \ref{CGHM}, there is a simple Harish-Chandra
$\gl_n$-module $V$ such that $M$ is equivalent to the generalized highest weight module $L(\z, V)$.

(c)   Suppose that $I_M=\{i_1,\dots, i_k\}$. After twisting $M$ by an inner automorphism (similar arguments to those in the proof of Proposition \ref{p14}), we can assume that
$$ I_M \cup F^+_M=\{1,\dots,n\}.$$
Here we have replaced our old $M$ with an equivalent module.

Choose a nonzero weight vector $ v\in M $ such that  $X_{\epsilon_i}v=0$ for any $ i \in F^+_M$.
We can further choose $v$ so that it is a common eigenvector of $ X_{\e_i}X_{-\e_i}$ for all $i$.
Then we can see that there is an $a=(a_1,\dots,a_n)\in\C^n$
 such that $U(H_n)v \cong G(a)$ with $a_i=0$ for $i\in F^+_M$.

Using the isomorphism in Proposition \ref{proposition3}, there is a simple $\sp_{2n}$-module $N$ such that $M$ is
 equivalent to $N\otimes_{\z} G(a)$.
 Note that $$\text{Supp} ( G(a))=\sum_{i\in I_{M}}(a_i\e_i+\Z \e_i)+\sum_{j\not\in I_{M}}(-\mathbb{N}\e_j).$$
 Since all the weight spaces of $N\otimes_{\z} G(a)$ are finite dimensional, there is a weight  $\lambda$ of $N$
 such that $\lambda+2\e_i\not\in \text{Supp}(N)$ for any $i\in\{1,\dots,n\}$. This implies that there are nonzero
 vectors in $N$ which are annihilated by $\mg_n^+\cap\sp_{2n}$.
 Therefore $N$ is isomorphic to the generalized highest weight module $L_{\sp_{2n}}(V)$. See Proposition \ref{CGHM}.
\end{proof}

So far we have obtained the classification for all simple Harish-Chandra modules over the symplectic oscillator  Lie  algebra $\mg_n$ for all $n$. Theorem \ref{p14} gave all such $\mg_n$-modules with $z$ acts trivially,  which are actually simple parabolically induced $\sp_{2n}$-modules and  simple cuspidal $\sp_{2n}$-modules (See \cite{M}).   Theorem \ref{t15} gave all such $\mg_n$-modules with $z$ acts non-trivially, which consists of three classes: cuspidal $\mg_n$-modules in (a), parabolically induced   $\mg_n$-modules in (b), and a third class described in (c). We can see that the  third class in Theorem \ref{t15} (c) does not appear for finite-dimensional simple Lie algebras.

\vspace{2mm}
\noindent
{\bf Acknowledgements. } This research is partially supported by NSFC (11771122, 11871190) and NSERC (311907-2015).

\end{document}